\documentclass[11pt]{article}

\usepackage{vmargin,graphicx,amsmath,amssymb,color,subfigure,enumerate,csquotes,epstopdf}
\usepackage[utf8]{inputenc}
\DeclareGraphicsRule{.pdftex}{pdf}{*}{}
\usepackage{hyperref}

\newtheorem{theorem}{Theorem}[section]
\newtheorem{lemma}[theorem]{Lemma}
\newtheorem{notation}[theorem]{Notation}
\newtheorem{proposition}[theorem]{Proposition}

\def\blacksquare{
\thinspace\nobreak \vrule width 5pt height 5pt depth 0pt}
\newtheorem{remark}[theorem]{Remark}

\newenvironment{proof}{\begin{trivlist}
                       \item[]\hspace{0cm}{\bf Proof: }
                       \hspace{0cm} }{\hfill $\blacksquare$
                     \end{trivlist}}

\makeatletter

\@addtoreset{equation}{section}
\makeatother

\newcommand{\R}{\mathbb{R}}
\newcommand{\Z}{\mathbb{Z}}
\newcommand{\dx}{\mathrm{d}}
\newcommand{\Dom}{\mathsf{Dom}}
\newcommand{\ess}{\mathsf{ess}}
\newcommand{\dis}{\mathsf{dis}}
\newcommand{\Dir}{\mathsf{Dir}}
\newcommand{\Neu}{\mathsf{Neu}}

\newcommand{\Ai}{\mathsf{Ai}}

\newcommand{\red}{\mathsf{red1}}
\newcommand{\app}{\mathsf{red2}}
\newcommand{\mode}{\mathsf{mod}}
\newcommand{\spann}{\mathsf{span}}
\newcommand{\eps}{\varepsilon}

\newcommand{\tens}{\mathsf{tens}}
\DeclareMathOperator*{\esssup}{ess\,sup}

\title{Spectral asymptotics of a broken $\delta$-interaction}
\author{V. Duch\^ene\footnote{IRMAR, CNRS, Universit\'e de Rennes 1, Campus de Beaulieu, F-35042 Rennes cedex, France; 
e-mail: \texttt{vincent.duchene@univ-rennes1.fr}} \ and \ N.~Raymond\footnote{IRMAR, Universit\'e de Rennes 1, Campus de Beaulieu, F-35042 Rennes cedex, France;
e-mail: \texttt{nicolas.raymond@univ-rennes1.fr}}}

\begin{document}
\maketitle

\begin{abstract}
This paper is concerned with the spectral analysis of a Hamiltonian with a $\delta$-interaction supported along a broken line with angle $\theta$. The bound states with energy slightly below the threshold of the essential spectrum are estimated in the semiclassical regime $\theta\to 0$.
\end{abstract}

\section{Motivation and results}

\subsection{Motivation}

\subsubsection{Why breaking the $\delta$-interaction?}
The $\delta$-interaction supported on various geometries has attracted a lot of interest recently as an alternative to standard quantum graphs (see for instance \cite{BK13}). In particular the reader may consult the review by Exner~\cite{Exner08} for an introduction to leaky quantum graphs and the lecture notes by Post \cite{P12} for convergence results between the two objects. Our aim is to investigate the spectrum of a broken $\delta$-interaction. Before defining the main operator analyzed in this paper we shall present our initial motivation. 
In the paper by Exner and N{\v{e}}mcov{\'a}~\cite[Section 5]{EN03} (see also their related paper~\cite{EN01}) the authors were concerned by the existence and estimates of the discrete spectrum of a Hamiltonian with a $\delta$-interaction supported on a star. In particular they analyzed the simple case of a star with two branches in Section 5.2 for which their general result establishes the existence of discrete spectrum below the essential spectrum (see also~\cite{EI01} for the case when the $\delta$-interaction is supported on a curve). What's more is that they prove that the number of bound states tends to infinity when the angle between two branches of their stars is small: they even get an explicit lower bound (see~\cite[Remark 5.10]{EN03}). Moreover they also provide numerical simulations of the eigenvalues and eigenfunctions (see~\cite[Fig. 8 and Fig. 11]{EN03} and also~\cite[Fig. 1 and Fig. 4]{EN01}). The spectral behaviors which show up there should be compared with recent results about broken waveguides by Dauge, Lafranche and Raymond~\cite[Fig. 11]{DaLafRa11} and~\cite{DauRay12} where similar phenomena are observed. In this work, we will precisely quantify the number of eigenvalues generated by the breaking of the support of a $\delta$-interaction and provide their asymptotic expansions when the breaking is strong (such spectral questions are quite natural as we can see in the related works~\cite{EJ13} and~\cite{Kondej13}). We will complete the considerations of~\cite{EN03} (and also~\cite{BEW08}) when the number of branches is two thanks to the light of semiclassical analysis. At the same time the present paper will provide some insight into Open Problem 7.3 in~\cite{Exner08}.

\subsubsection{Definition of the main operator}
Let us now define our main operator. For $\alpha>0$, we introduce the following quadratic form
\begin{equation}\label{Q}
\mathcal{Q}_{\theta,\alpha}(\psi)=\int_{\R^2} |\nabla\psi|^2\dx u \dx v-\alpha\int_{\R} |\psi(|s|\cos\theta,s\sin\theta)|^2\dx s,\quad\forall\psi\in H^1(\R^2),
\end{equation}
where $\theta\in\left(0,\frac{\pi}{2}\right)$ is the breaking angle. This is well-known that $\mathcal{Q}_{\theta,\alpha}$ is semi-bounded (see~\cite{BEKS94}). In particular we may consider its Friedrichs extension $\mathcal{H}_{\theta,\alpha}$. We can formally write
$$\mathcal{H}_{\theta,\alpha}=-\Delta-\alpha\delta_{\Sigma_{\theta}},$$
where 
$$\Sigma_{\theta}=\{(|s|\cos\theta, s\sin\theta),\quad s\in\R\}.$$
The following characterization of the essential spectrum is well-known (see~\cite{EI01}).
\begin{lemma}\label{essential}
We have
$$\sigma_{\ess}(\mathcal{H}_{\theta,\alpha})=\left[-\frac{\alpha^2}{4},+\infty\right).$$
\end{lemma}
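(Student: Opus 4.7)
The plan is to establish the two inclusions separately: first, show that $\inf\sigma_{\ess}(\mathcal{H}_{\theta,\alpha})\geq-\alpha^2/4$ via a Persson-type argument combined with a decomposition of the support far from the vertex, and then show that every $\lambda\geq-\alpha^2/4$ belongs to $\sigma_{\ess}(\mathcal{H}_{\theta,\alpha})$ by constructing singular Weyl sequences that live far away on one branch, where the support $\Sigma_\theta$ locally looks like a straight line. The underlying one-dimensional building block is the operator $-\partial_y^2-\alpha\delta_0$ on $\R$ whose spectrum is $\{-\alpha^2/4\}\cup[0,+\infty)$, and the two-dimensional operator with $\delta$-interaction on a straight line has spectrum $[-\alpha^2/4,+\infty)$ by separation of variables.

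For the lower bound, I would appeal to Persson's formula
\[
\inf\sigma_{\ess}(\mathcal{H}_{\theta,\alpha})=\lim_{R\to+\infty}\inf\Bigl\{\mathcal{Q}_{\theta,\alpha}(\psi)\,:\,\psi\in H^1(\R^2),\ \|\psi\|=1,\ \mathrm{supp}(\psi)\subset\R^2\setminus B(0,R)\Bigr\}.
\]
For $R$ large, $\Sigma_\theta\setminus B(0,R)$ splits into two disjoint straight rays $\Sigma_\theta^\pm$. Given $\psi\in H^1(\R^2)$ supported outside $B(0,R)$, I would use a rotation/reflection to bring each ray onto the $u$-axis and then extend $\psi$ trivially to get, for each branch, a function on the half-plane; comparing with the explicit lower bound for the straight-line problem (which yields $-\alpha^2/4$ via the one-dimensional Fourier transform in the direction transverse to the line) gives
\[
\mathcal{Q}_{\theta,\alpha}(\psi)\geq-\tfrac{\alpha^2}{4}\|\psi\|^2,
\]
whence $\inf\sigma_{\ess}(\mathcal{H}_{\theta,\alpha})\geq-\alpha^2/4$.

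For the upper bound, fix $\lambda=-\alpha^2/4+k^2$ with $k\geq 0$. Along one branch of $\Sigma_\theta$, introduce tubular coordinates $(s,t)$ so that $s$ is the arclength along the branch and $t$ is the signed distance to it; near the branch, the quadratic form becomes (exactly, since the branch is straight) the one for a $\delta$-interaction on a line. The generalized eigenfunctions $\varphi_k(s,t)=e^{iks}e^{-\alpha|t|/2}$ provide non-$L^2$ solutions at energy $\lambda$. I would cut them off by $\chi_n(s)=\chi(s/n-n)$ and $\eta(t)$, where $\chi,\eta$ are suitable compactly supported plateau functions, then translate along the branch so that the supports are pushed to infinity and in particular avoid the vertex. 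The resulting sequence $\psi_n$ has $\|\psi_n\|\to+\infty$ and $\|(\mathcal{H}_{\theta,\alpha}-\lambda)\psi_n\|/\|\psi_n\|\to 0$, because the commutator terms from the cutoffs are controlled by $\|\chi'\|_\infty/\sqrt{n}$ while the $\delta$-interaction in these coordinates is treated exactly by the ansatz.

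The main technical point is ensuring that the Weyl test functions are in the form domain $H^1(\R^2)$ (which is automatic once one uses $\eta\in H^1(\R)$ and not merely $C^1$ away from $0$), and making explicit the splitting of $\Sigma_\theta$ outside a large ball in the Persson step without losing constants---this is where the hypothesis $\theta\in(0,\pi/2)$ is needed to guarantee that the two rays stay at distance $\geq R\sin\theta$ apart. All remaining computations are standard and essentially amount to checking, via the trace inequality in $H^1(\R^2)$, that the cross contribution of $\alpha\int_{\Sigma_\theta^\mp}|\psi|^2\dx s$ from a Weyl sequence supported near $\Sigma_\theta^\pm$ tends to zero.
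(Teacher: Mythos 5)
The paper does not supply a proof of this lemma: it is quoted as well known with a reference to Exner--Ichinose~\cite{EI01}, so there is no in-text proof to match. That said, your two-sided plan (Persson's characterization for $\inf\sigma_{\ess}$, plus singular Weyl sequences pushed to infinity along one straight branch) is the standard route, and your upper-bound step is fine as sketched.

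The gap is in the implementation of the lower bound. Rotating one ray onto an axis and ``extending $\psi$ trivially to a half-plane'' cannot, by itself, produce the constant $-\frac{\alpha^2}{4}$. If you cut along the line through one ray and keep one half, the $\delta$-interaction becomes a Robin boundary term of full strength $\alpha$ (not $\alpha/2$), whose ground energy is $-\alpha^2$; if instead you use an even reflection to rebuild a full-plane $\delta$-interaction, the Dirichlet integral doubles while the trace term along the ray does not, and the constant is again off. What is actually needed is a genuine localization separating the two branches. Since for $|(u,v)|\geq R$ the two rays of $\Sigma_\theta$ are at mutual distance at least of order $R\sin\theta$, one can construct a quadratic partition of unity $\chi_+^2+\chi_-^2+\chi_0^2=1$ on a scale comparable to $R$, with $\chi_\pm\equiv1$ near $\Sigma_\theta^\pm$, $\chi_\pm\equiv0$ near $\Sigma_\theta^\mp$, and $|\nabla\chi_j|=O(R^{-1})$. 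Then, for $\psi$ supported at distance at least $R$ from the origin, the IMS formula yields
\[
\mathcal{Q}_{\theta,\alpha}(\psi)=\sum_{j\in\{+,-,0\}}\mathcal{Q}_{\theta,\alpha}(\chi_j\psi)-\sum_{j}\big\||\nabla\chi_j|\,\psi\big\|^2,
\]
with localization error $O(R^{-2})\|\psi\|^2$. Each $\chi_\pm\psi$ only interacts with the single ray $\Sigma_\theta^\pm$, and enlarging that ray to the full line it spans only lowers the quadratic form, so the straight-line bound gives $\mathcal{Q}_{\theta,\alpha}(\chi_\pm\psi)\geq-\frac{\alpha^2}{4}\|\chi_\pm\psi\|^2$ (and trivially $\mathcal{Q}_{\theta,\alpha}(\chi_0\psi)\geq0$). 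Summing and sending $R\to\infty$ in Persson's formula gives $\inf\sigma_{\ess}(\mathcal{H}_{\theta,\alpha})\geq-\frac{\alpha^2}{4}$, which together with your Weyl sequences proves the claim. (One should also invoke a Persson-type theorem valid for form perturbations with a $\delta$-interaction, as in~\cite{BEKS94}, rather than the classical statement for Schr\"odinger operators with potentials.)
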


We would like to describe the spectrum below the essential spectrum in the strong breaking limit $\theta\to 0$. For that purpose we shall perform the following rescaling:
\begin{equation}\label{rescaling}
x=\alpha\frac{\sin\theta}{\cos^2\theta} u,\quad y=\alpha\frac{1}{\cos\theta}v,
\end{equation}
which permits to rephrase the problem into a semiclassical problem. 
We introduce the unitary transform, defined for $\psi\in L^2(\R^2)$ by
$$U_{\theta,\alpha}\psi(x,y)=\frac{\cos^{3/2}\theta}{\alpha\sin^{1/2}\theta}\psi\left(\frac{\cos^2\theta}{\alpha\sin\theta} x,\frac{\cos\theta}{\alpha}y\right).$$
We have $\mathcal{H}_{\theta,\alpha}=\alpha^2(1+h^2)U_{\theta,\alpha}^{-1}\mathfrak{H}_{h}U_{\theta,\alpha}$  where $\mathfrak{H}_{h}$ is the Friedrichs extension of the rescaled quadratic form:
\begin{equation}\label{rQ}
\mathfrak{Q}_{h}(\psi)=\int_{\R^2} h^2|\partial_{x}\psi|^2+|\partial_{y}\psi|^2 \dx x \dx y-\int_{\R} |\psi(|s|,s)|^2\dx s,\quad\forall\psi\in H^1(\R^2),
\end{equation}
and where $h=\tan\theta$. Formally we may write
\begin{equation}\label{main-op}
\mathfrak{H}_{h}=-h^2\partial_{x}^2-\partial^2_{y}-\delta_{\Sigma_{\frac{\pi}{4}}}.
\end{equation}
In particular, we notice that:
$$\sigma_{\ess}(\mathfrak{H}_{h})=\left[-\frac{1}{4(1+h^2)},+\infty\right).$$
\begin{notation}\label{lan}
We denote by $\lambda_{n}(h)$ the $n$-th eigenvalue, if it exists, of $\mathfrak{H}_{h}$. More generally for a semi-bounded quadratic form $\mathfrak{Q}_{h}^{\natural}$, we denote by $\mathfrak{H}_{h}^{\natural}$ the corresponding Friedrichs extension and by $\lambda_{n}^{\natural}(h)$ the $n$-th eigenvalue, if it exists.
Let us also recall the min-max characterization of the $n$-th eigenvalue. We have
\[
   \lambda^\natural_n(h) =
   \inf_{\substack{G\subset \Dom(\mathfrak{Q}^\natural_{h})\\ \dim G=n}} \ \sup_{\substack{
    \psi\in G}}   \frac{\mathfrak{Q}^\natural_{h}(\psi)}{\Vert\psi\Vert^2} \,.
\]
Here and below $\|\cdot\|$ denotes the standard $L^2$ norm on $\R^2$ while $\|\cdot\|_{L^2(\R)}$ is the $L^2$ norm on $\R$. 
We will also denote by $\langle\cdot,\cdot\rangle_{L^2(\R_{y})}$ the partial scalar product defined by:
$$\langle\psi_{1},\psi_{2}\rangle_{L^2(\R_{y})}=\int_{\R_{y}}\psi_{1}(x,y)\psi_{2}(x,y)\dx y$$
and by $\|\cdot\|_{L^2(\R_{y})}$ the corresponding norm.
\end{notation}
By using this semiclassical reformulation we will easily get an explicit lower bound for $\mathcal{Q}_{\theta,\alpha}$.
\begin{proposition}\label{semi-bounded}
For all $\psi\in H^1(\R^2)$ and $\theta\in\left(0,\frac{\pi}{2}\right)$:
$$\mathcal{Q}_{\theta,\alpha}(\psi)\geq -\frac{\alpha^2}{\cos^2\theta}\Vert\psi\Vert^2.$$
\end{proposition}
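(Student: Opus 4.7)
The text announces that the bound follows ``by using this semiclassical reformulation,'' so my plan is to reduce it to a statement about $\mathfrak{Q}_{h}$. Since the unitary $U_{\theta,\alpha}$ preserves the $L^2$-norm and $\mathcal{Q}_{\theta,\alpha}(\psi)=\alpha^2(1+h^2)\mathfrak{Q}_{h}(U_{\theta,\alpha}\psi)$ with $h=\tan\theta$ and $1+h^2=\cos^{-2}\theta$, the proposition is equivalent to
\[
\mathfrak{Q}_{h}(\varphi)\geq -\|\varphi\|^2,\qquad \forall\varphi\in H^1(\R^2).
\]
The parameter $h$ will play no role in what follows beyond $h^2\geq 0$: I will simply discard the nonnegative contribution $h^2\|\partial_{x}\varphi\|^2$.

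The only real work is to estimate $\int_{\R}|\varphi(|s|,s)|^2\,\dx s$ in terms of the $\|\partial_{y}\varphi\|$- and $\|\varphi\|$-norms. I would slice in the $y$-variable at fixed $x$: writing $|\varphi(x,y_{0})|^2=2\,\mathrm{Re}\int_{-\infty}^{y_{0}}\varphi\,\overline{\partial_{y}\varphi}\,\dx y$ and applying Cauchy--Schwarz yields the classical one-dimensional trace inequality
\[
|\varphi(x,y_{0})|^2\leq \|\varphi(x,\cdot)\|_{L^2(\R_{y})}\,\|\partial_{y}\varphi(x,\cdot)\|_{L^2(\R_{y})}.
\]
Splitting the curve integral as $\int_{\R}|\varphi(|s|,s)|^2\,\dx s=\int_{0}^{\infty}\bigl(|\varphi(x,x)|^2+|\varphi(x,-x)|^2\bigr)\,\dx x$, applying the pointwise bound at both $y_{0}=\pm x$, and concluding with Cauchy--Schwarz in $x$ would give
\[
\int_{\R}|\varphi(|s|,s)|^2\,\dx s\leq 2\|\varphi\|\,\|\partial_{y}\varphi\|.
\]

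The proof is then closed by Young's inequality $2ab\leq a^2+b^2$ applied to $a=\|\partial_{y}\varphi\|$, $b=\|\varphi\|$, which yields
\[
\mathfrak{Q}_{h}(\varphi)\geq \|\partial_{y}\varphi\|^2-2\|\partial_{y}\varphi\|\,\|\varphi\|\geq -\|\varphi\|^2,
\]
and the proposition follows by undoing the unitary reduction. The computation is routine; the only mild subtlety is the choice of slicing direction. Slicing at fixed $y$ rather than at fixed $x$ would produce a $\|\partial_{x}\varphi\|$-contribution weighted only by $h^2$, which cannot absorb an $h$-independent trace bound. Slicing at fixed $x$, by contrast, takes advantage of the two-to-one projection of $\Sigma_{\pi/4}$ onto the horizontal axis, producing exactly the factor $2$ that the Young step needs in order to recover, after rescaling back, the constant $\alpha^2/\cos^2\theta$ claimed in the statement.
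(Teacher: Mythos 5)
Your proof is correct but takes a genuinely different route from the paper's. The paper establishes the lower bound as a corollary of Proposition~\ref{lb-rQ}, which is itself fed by the explicit spectral analysis of the one-dimensional double $\delta$-well: the identity $\mathfrak{Q}_h(\psi)=\int_{x>0}\mathfrak{q}_x(\psi(x,\cdot))\,\dx x + (\text{nonnegative})$ is combined with the pointwise-in-$x$ bound $\mathfrak{q}_x(f)\geq\mu_1(x)\|f\|_{L^2(\R)}^2\geq -\|f\|_{L^2(\R)}^2$, the inequality $\mu_1(x)\geq -1$ coming from the Lambert-function formula and the unique minimum at $x=0$. That route is natural for the authors because the same function $\mu_1$ drives everything else in the paper. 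Your argument bypasses the fibered model entirely: you estimate the curve integral by the elementary one-dimensional trace inequality, integrate in $x$, and close with Young. This is more self-contained (no solved $\delta$-well, no Lambert function) and in fact gives an alternative elementary proof of item~\ref{5} of Proposition~\ref{delta1D}. Both approaches slice at fixed $x$; your remark that slicing at fixed $y$ fails because the transverse derivative carries only $h^2$ is precisely why the paper sets up $\mathfrak{q}_x$ as a form in $y$.

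One small imprecision worth flagging: the derivation you sketch, $|\varphi(x,y_0)|^2 = 2\,\mathrm{Re}\int_{-\infty}^{y_0}\varphi\,\overline{\partial_y\varphi}\,\dx y$ followed by a single Cauchy--Schwarz, only yields $|\varphi(x,y_0)|^2\leq 2\,\|\varphi(x,\cdot)\|_{L^2}\,\|\partial_y\varphi(x,\cdot)\|_{L^2}$; that factor of $2$ would propagate to $\int_\R|\varphi(|s|,s)|^2\,\dx s\leq 4\|\varphi\|\,\|\partial_y\varphi\|$ and Young would then only give $\mathfrak{Q}_h(\varphi)\geq -4\|\varphi\|^2$, which is too weak. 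The sharp constant you state is of course correct, but to get it you must either symmetrize (also write $|\varphi(x,y_0)|^2=-2\,\mathrm{Re}\int_{y_0}^{\infty}\varphi\,\overline{\partial_y\varphi}\,\dx y$ and combine via $\sqrt{a_1b_1}+\sqrt{a_2b_2}\leq\sqrt{(a_1+a_2)(b_1+b_2)}$), or, more in the spirit of the problem, integrate toward $+\infty$ for the trace at $y_0=x$ and toward $-\infty$ for the trace at $y_0=-x$, so the two half-lines $(x,\infty)$ and $(-\infty,-x)$ are disjoint and the sum of the two one-sided estimates still closes to $|\varphi(x,x)|^2+|\varphi(x,-x)|^2\leq 2\|\varphi(x,\cdot)\|_{L^2}\,\|\partial_y\varphi(x,\cdot)\|_{L^2}$. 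Either fix is one line; the rest of your argument then goes through exactly as you wrote it.
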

\begin{remark}
In fact this lower bound permits to define directly the Friedrichs extension associated with $\mathcal{H}_{\theta,\alpha}$ without using the general result of~\cite{BEKS94}. This lower bound degenerates when $\theta$ goes to $\frac{\pi}{2}$ but, as we will see, it is more and more accurate when $\theta$ goes to $0$. A fine lower bound (independently from $\theta$) is obtained in~\cite{L13}. In the regime $\theta\to 0$, an easy corollary of one of our main results will provide a description of the optimal lower bound.
\end{remark}

\subsection{Main results and organization of the paper}
Let us now state the main results of this paper. Our first result is an estimate of the number of eigenvalues of $\mathfrak{H}_{h}$ below the threshold of the essential spectrum. For this purpose we shall introduce some notation.
\begin{notation}\label{Lambert}
We denote by $W : [-e^{-1},+\infty) \to [-1,+\infty)$ the Lambert function defined as the inverse of $[-1,+\infty)\ni w\mapsto we^w\in[-e^{-1},+\infty)$.
\end{notation}
\begin{notation}\label{N.number}
Given $\mathfrak H$ a semi-bounded self-adjoint operator and $a<\inf\sigma_{\ess}(\mathfrak H)$, we denote
\[ \mathcal N(\mathfrak H,a)\ = \ \#\{ \lambda\in \sigma(\mathfrak H) \ : \ \lambda \leq a \}<+\infty .\]
The eigenvalues are counted with multiplicity.
\end{notation}

\begin{theorem}\label{number}
There exists $M_{0}>0$ such that for all $C(h)\geq M_{0}h$ with $C(h)\underset{h\to 0}{\to} C_0\geq 0$:
$$\mathcal{N}\left(\mathfrak{H}_{h},-\frac{1}{4}-C(h)\right)\underset{h\to 0}{\sim}\frac{1}{\pi h}\int_{x=0}^{+\infty}\sqrt{\left(-\frac{1}{4}-C_0+\left(\frac{1}{2}+\frac{1}{2x}W(xe^{-x})\right)^2\right)_{+}}\dx x,$$
with the notation $f_+(x)=\max\{0,f(x)\}$.
\end{theorem}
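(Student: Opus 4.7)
The plan is a Born--Oppenheimer / Feshbach reduction in the variable $y$ to an effective one--dimensional semiclassical Schr\"odinger operator $-h^2\partial_x^2+\mu_1(x)$ on $L^2(\R_x)$, followed by the classical semiclassical Weyl law in one dimension.

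First I would study, for each fixed $x$, the transverse fiber operator $T(x)$ on $L^2(\R_y)$: for $x>0$ it is formally $-\partial_y^2-\delta_{y=x}-\delta_{y=-x}$, and for $x\leq 0$ it is the free Laplacian. A symmetric \textit{ansatz} that is a decaying exponential outside $[-x,x]$ and $\cosh(ky)$ inside, combined with continuity and the $\delta$-jump conditions at $y=\pm x$, produces the characteristic equation $k(1+\tanh(kx))=1$. Rewriting this as $(2k-1)e^{2kx}=1$ and setting $w=x(2k-1)$ gives $we^w=xe^{-x}$, so by Notation~\ref{Lambert}
$$k(x)=\tfrac{1}{2}+\tfrac{1}{2x}W(xe^{-x}),\qquad \mu_1(x):=-k(x)^2.$$
This is exactly the quantity inside the square root in the theorem. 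For $x\leq 0$ the operator $T(x)$ has no bound state, and one sets $\mu_1(x)=0$; this region contributes nothing to the formula since $-\tfrac{1}{4}-C_0<0$.

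Next I would perform the fiber decomposition. For $\psi\in H^1(\R^2)$ write $\psi(x,y)=f(x)u_1(x,y)+\phi(x,y)$ with $\phi(x,\cdot)$ orthogonal in $L^2(\R_y)$ to the normalised ground state $u_1(x,\cdot)$ of $T(x)$. Using Fubini and the fact that $u_1(x,\cdot)$ diagonalises $T(x)$, the quadratic form decomposes as
$$\mathfrak{Q}_h(\psi)=h^2\int_{\R^2}|\partial_x\psi|^2\,\dx x\,\dx y+\int_{\R}\mu_1(x)|f(x)|^2\,\dx x+q_\perp(\phi),$$
with $q_\perp(\phi)\geq \int_\R \mu_2(x)\Vert \phi(x,\cdot)\Vert_{L^2(\R_y)}^2\,\dx x$ and $\mu_2(x)\geq -\tfrac{1}{4}+\eps_0$ uniformly (the transverse gap). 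The kinetic term $h^2\Vert\partial_x\psi\Vert^2$ splits into the desired 1D kinetic $h^2\Vert\partial_x f\Vert^2_{L^2(\R_x)}$, a diagonal Born--Oppenheimer correction $h^2\Vert f\,\partial_x u_1\Vert^2$, and cross terms, all of which are absorbed using Cauchy--Schwarz and the transverse gap.

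Finally, I would carry out Dirichlet--Neumann bracketing in $x$ on a mesh of size $h^\nu$, $\nu\in(0,1)$, together with an energy cutoff at $x\sim |\log h|$ where $\mu_1(x)$ gets close to $-\tfrac{1}{4}$. This sandwiches $\mathcal{N}(\mathfrak H_h,-\tfrac14-C(h))$ between the counting functions of one-dimensional semiclassical operators $-h^2\partial_x^2+\mu_1(x)\pm O(h)$. The classical 1D semiclassical Weyl formula then yields
$$\mathcal{N}\!\left(-h^2\partial_x^2+\mu_1(x),\,-\tfrac{1}{4}-C(h)\right)\underset{h\to 0}{\sim}\frac{1}{\pi h}\int_\R\sqrt{\bigl(-\tfrac{1}{4}-C_0-\mu_1(x)\bigr)_+}\,\dx x,$$
which, since the integrand vanishes for $x\leq 0$, coincides with the formula in the theorem.

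The main technical obstacle I foresee is absorbing the Born--Oppenheimer and bracketing errors \emph{uniformly in $x$}, particularly near $x=0$ (where the two $\delta$'s merge, $u_1(x,\cdot)$ develops cusps at $y=\pm x$, and the reduced potential is only one-sided continuous) and as $x\to +\infty$ (where $\mu_1(x)\to -\tfrac{1}{4}$ approaches the essential spectrum). The exponential closure $k(x)^2-\tfrac{1}{4}\sim \tfrac{1}{2}e^{-x}$ allows a safe cutoff at $x=O(|\log h|)$, and the hypothesis $C(h)\geq M_0 h$ with $M_0$ large is precisely the margin that guarantees the $O(h)$ bracketing remainders do not contaminate the leading-order count at the scale $C(h)$ above the essential spectrum threshold.
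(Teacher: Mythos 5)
Your proposal reproduces the paper's strategy essentially verbatim: the transverse fiber analysis produces $\mu_1(x)$ via the Lambert function (Proposition~\ref{delta1D}), the projection $\Pi$ onto the transverse ground state together with the spectral gap $\mu_2-\mu_1$ to absorb the commutator $[\partial_x,\Pi]$ gives the two-sided reduction to 1D effective operators (Lemma~\ref{op-red} and Propositions~\ref{dimensional-reduction-lb}, \ref{comp}), and Dirichlet--Neumann bracketing in $x$ yields the Weyl count (Proposition~\ref{number-1D}). The one caveat is that you cannot simply invoke a \emph{classical} 1D Weyl law here, since the threshold $-\tfrac14-C(h)$ may tend to the essential-spectrum edge as $h\to0$; the paper proves an extended statement (Proposition~\ref{number-1D}) whose key hypothesis $h^{1/3}\big(x_{\max}(E(h))-x_{\min}(E(h))\big)\to 0$ is exactly what your $|\log h|$-cutoff (coming from the exponential rate $-\mu_1(x)-\tfrac14\sim \tfrac12 e^{-x}$) verifies, and the bracketing mesh must be taken of size $h^{2/3}$ (not an arbitrary $h^{\nu}$, $\nu\in(0,1)$) to balance the Lipschitz error $h^{\nu/2}$ per cell against the $O(1)$ Dirichlet--Neumann defect per cell.
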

\begin{remark}
It is important to notice that in the above result, we estimate the counting function below a potentially moving (w.r.t. $h$) threshold. In particular, the distance between $-\frac{1}{4}-C(h)$ and the bottom of the essential spectrum is allowed to vanish in the semiclassical limit. Therefore our statement is slightly unusual as customary results would typically concern $\mathcal{N}\left(\mathfrak{H}_{h},E\right)$ with $E$ fixed and satisfying $E<-\frac{1}{4}$, so as to insure a fixed security distance to the bottom of the essential spectrum (see for instance the related work~\cite{MoTr05}). 
\end{remark}
\begin{remark}
In the small angle limit, this result is a refinement of~\cite[Remark 5.10]{EN03}. Indeed, Exner and N\v{e}mcov\'a show that the number of bound states grows as $n\gtrsim  \frac{C}{\pi h}$ with $C=\frac{3^{3/2}}{8\sqrt{5}}\approx 0.290$, whereas our result implies a better constant $C\approx 1.379$.
\end{remark}
Our second result concerns the asymptotics of the low lying spectrum of $\mathfrak{H}_{h}$. Let us first recall the definition of the Airy operator.
\begin{notation}\label{Airy}
The Airy operator is the Dirichlet realization on $L^2((0,+\infty))$ of $-\partial_{x}^2+x$. Its $n$-th eigenvalue is nothing but the absolute value of the $n$-th zero (counted in decreasing order), denoted by $z_{\Ai}(n)$, of the standard Airy function.
\end{notation}
\begin{theorem}\label{firsteigenvalues}
For all $n\geq 1$, we have:
$$\lambda_{n}(h)\underset{h\to 0}{=}-1+2^{2/3}z_{\Ai}(n) h^{2/3}+O(h).$$
\end{theorem}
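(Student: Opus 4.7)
The plan is a Born--Oppenheimer reduction in the transverse variable $x$. Writing
\[
\mathfrak H_h = -h^2\partial_x^2+\mathfrak m(x),\qquad \mathfrak m(x) = -\partial_y^2-\delta(\cdot-x)-\delta(\cdot+x)\ \text{for}\ x\geq 0, \quad \mathfrak m(x) = -\partial_y^2\ \text{for}\ x<0,
\]
one expects the low-lying eigenvalues of $\mathfrak H_h$ to be captured by the scalar effective operator $-h^2\partial_x^2+\mu_1(x)$ with $\mu_1(x):=\inf\sigma(\mathfrak m(x))$, supplemented by a Dirichlet condition at $x=0$ inherited from the loss of the transverse bound state when $x<0$.

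The first step is the analysis of $\mathfrak m(x)$. For $x\geq 0$ its ground state is even in $y$; writing $\psi(y)=A\cosh(\kappa y)$ on $(0,x)$ and $\psi(y)=B e^{-\kappa y}$ on $(x,+\infty)$ and using the jump condition at $y=x$, one gets the transcendental relation $\kappa(1+\tanh(\kappa x))=1$ with $\mu_1(x)=-\kappa(x)^2$. This yields $\mu_1(0)=-1$, $\mu_1(x)\underset{x\to+\infty}{\to}-\tfrac14$, and the crucial Taylor expansion
\[
\mu_1(x)=-1+2x+O(x^2)\quad(x\to 0^+).
\]
The rest of $\sigma(\mathfrak m(x))$ lies in $[-\tfrac14,+\infty)$, providing a uniform spectral gap above the ground mode.

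For the upper bound I would insert the quasimodes $\psi_n(x,y)=\chi(x)\,f_n(h^{-2/3}x)\,u_x(y)$ into the Rayleigh quotient, where $u_x$ is the normalized transverse ground state of $\mathfrak m(x)$ for $x\geq 0$, $\chi$ is a smooth cutoff supported in $[0,h^{\eps}]$ for some small $\eps>0$, and $f_n$ is the $n$-th Dirichlet eigenfunction of the model Airy operator $-\partial_X^2+2X$ on $(0,+\infty)$, whose $n$-th eigenvalue is exactly $2^{2/3}z_{\Ai}(n)$. After the rescaling $x=h^{2/3}X$, Taylor-expanding $\mu_1$ and controlling the Born--Oppenheimer cross term $h^2\|\partial_x u_x\|_{L^2(\R_y)}^2$, the Rayleigh quotient equals $-1+2^{2/3}z_{\Ai}(n)h^{2/3}+O(h)$, and Notation \ref{lan} yields the upper inequality.

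The lower bound uses a Feshbach-type decomposition $\psi(x,y)=\phi(x)u_x(y)+R(x,y)$ with $R$ orthogonal to $u_x$ in $L^2(\R_y)$. The uniform transverse gap above $\mu_1(x)$ lets one absorb $R$ into an $O(h^2)$ error, and a preliminary Agmon estimate in $x$ (using that the effective potential is $\geq 0$ for $x<0$ and that the eigenvalue sought is close to $-1$) confines the eigenfunctions to a small neighbourhood of $x=0^+$. The problem then reduces to the scalar form $\int_0^{+\infty}\bigl(h^2|\phi'|^2+\mu_1(x)|\phi|^2\bigr)\dx x$ with Dirichlet condition at $0$, which after the zoom $x=h^{2/3}X$ converges to the Dirichlet Airy model $-\partial_X^2+2X$ on $(0,+\infty)$ with eigenvalues $2^{2/3}z_{\Ai}(n)$. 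The main obstacle is the interface at $x=0$: the projector onto $u_x$ is only defined for $x\geq 0$, and justifying the effective Dirichlet condition quantitatively requires an IMS partition of unity isolating the region $|x|\leq h^{\eps}$ from the region where $\mu_1(x)+\tfrac14$ is of order one, together with explicit control of $u_x$ and $\partial_x u_x$ uniformly down to $0^+$ extracted from the transcendental equation in Step~1.
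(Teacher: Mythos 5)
Your proposal is correct in substance and follows the same Born--Oppenheimer philosophy as the paper: reduce to the scalar operator $-h^2\partial_x^2+\mu_1(x)$, exploit the Taylor expansion $\mu_1(x)=-1+2x+O(x^2)$ near the minimum, and recognize the Airy model after the zoom $x=h^{2/3}X$. The transcendental relation $\kappa(1+\tanh(\kappa x))=1$ that you derive is equivalent to the paper's relation $(2\kappa-1)e^{2\kappa x}=1$ and to the Lambert-$W$ formula, and the rescaling argument showing that $-\partial_X^2+2X$ on $(0,+\infty)$ has Dirichlet eigenvalues $2^{2/3}z_{\Ai}(n)$ is exactly right.

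The route differs from the paper's in two respects. First, the paper does not reduce to a half-line Dirichlet problem. Instead it constructs two-sided variational comparisons (Propositions~\ref{red-ub},~\ref{dimensional-reduction-lb},~\ref{P.QvsQtens},~\ref{red-lb}, culminating in Proposition~\ref{comp}) with one-dimensional operators $\mathfrak H^{\mode j}_h$ on the \emph{whole} line, whose potentials equal $\mu_1(x)$ for $x\geq0$ and jump to $1$ (resp.\ $0$) for $x<0$. It then cites the prior analysis of \cite[Section 3]{DauRay12} for the Airy asymptotics of such one-dimensional operators with a high barrier on one side of the well. This keeps the whole argument purely variational and sidesteps the IMS/Dirichlet-justification step you (correctly) flag as the delicate point: the Dirichlet behavior emerges automatically in \cite{DauRay12} because the barrier height is $\geq\tfrac34$ above the energy $\approx -1$ and the leakage into $\{x<0\}$ contributes only $O(h)$ to the eigenvalue. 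Second, your quasimode construction for the upper bound is replaced in the paper by the comparison $\lambda_n(h)\leq\lambda_n^{\red}(h)$ obtained by plugging $f(x)\tilde u_x(y)$ into $\mathfrak Q_h$ (Lemmas~\ref{op-red}--\ref{op-red-bil}), and the cross term you mention, $h^2\|\partial_x u_x\|^2_{L^2(\R_y)}$, is precisely what is controlled by item~\ref{6} of Proposition~\ref{delta1D}. Both plans are legitimate; yours does more by hand and would need to carry out the IMS localization and the $O(h)$ estimate on the mismatch at $x=0$ explicitly, whereas the paper's reduces these steps to a citation and to a clean pair of form inequalities.
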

\begin{remark}
This asymptotic expansion explains the behavior of the spectral curves of~\cite[Fig. 8]{EN03} when the angle approaches zero: the behavior of the first eigenvalues is governed by the Airy operator. Our result is a refinement (in the small angle limit) of~\cite{BEW08} since we have an accurate description of the first eigenvalues and not only an upper bound of the first one (see also the upper bounds of the first eigenvalue obtained in~\cite{BEW09} for star graphs).
\end{remark}
From Theorem~\ref{firsteigenvalues} this is possible to deduce a quasi-tensorial structure of the first eigenfunctions.
\begin{theorem}\label{tensorisation-x=0}
For all $C_{0}>0$, there exist $h_{0}>0$, $C>0$ such that for all $h\in(0,h_{0})$ and all eigenpairs $(\lambda,\psi)$ such that $\lambda\leq -1+C_{0}h^{2/3}$, we have
$$\int_{\R^2} |\psi-\Pi_{0}\psi|^2\dx x\dx y\leq Ch^{2/3}\Vert\psi\Vert^2,$$
where $\Pi_{0}\psi=\langle\psi,e^{-|y|} \rangle_{L^2(\R_{y})}e^{-|y|}$.
\end{theorem}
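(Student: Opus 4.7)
The plan is to decompose $\psi=\Pi_{0}\psi+\psi^{\perp}$ and to reduce the claim to a quadratic-form comparison. Set $f(x):=\langle\psi(x,\cdot),e^{-|y|}\rangle_{L^{2}(\R_{y})}$, so that $\Pi_{0}\psi(x,y)=f(x)e^{-|y|}$ and $\psi^{\perp}(x,\cdot)\perp e^{-|y|}$ in $L^{2}(\R_{y})$ at each $x$. Since $\partial_{x}$ commutes with $\Pi_{0}$, the $x$-kinetic piece of $\mathfrak{Q}_{h}$ splits orthogonally across the decomposition; the remaining cross term equals $2\,\mathrm{Re}\,\langle\mathfrak{H}_{h}\Pi_{0}\psi,\psi^{\perp}\rangle$, which, using $\mathfrak{H}_{h}\psi=\lambda\psi$ together with $\Pi_{0}\psi\perp\psi^{\perp}$, reduces to $2(\lambda\|\Pi_{0}\psi\|^{2}-\mathfrak{Q}_{h}(\Pi_{0}\psi))$. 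Substituting into $\mathfrak{Q}_{h}(\psi)=\lambda\|\psi\|^{2}$ and rearranging yields the key identity
\[
\mathfrak{Q}_{h}(\psi^{\perp})-\lambda\|\psi^{\perp}\|^{2}\;=\;\mathfrak{Q}_{h}(\Pi_{0}\psi)-\lambda\|\Pi_{0}\psi\|^{2}.\qquad(\star)
\]

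For the left-hand side of $(\star)$, I would use that $e^{-|y|}$ is the ground state of $-\partial_{y}^{2}-2\delta_{0}$ (eigenvalue $-1$, essential spectrum $[0,+\infty)$), so that the fiber inequality $\int|\partial_{y}\psi^{\perp}(x,\cdot)|^{2}\,\dx y\geq 2|\psi^{\perp}(x,0)|^{2}$ holds pointwise in $x$. The discrepancy between the $\Sigma$-trace and the $\{y=0\}$-trace of $|\psi^{\perp}|^{2}$ is controlled via the Taylor--Cauchy-Schwarz estimate $|\psi^{\perp}(x,\pm x)-\psi^{\perp}(x,0)|^{2}\leq|x|\int|\partial_{y}\psi^{\perp}(x,\cdot)|^{2}\,\dx y$, combined with the Agmon-type localization $|x|\lesssim h^{2/3}$ on the essential support of $\psi$ (which should be available from the proof of Theorem~\ref{firsteigenvalues}). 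After bookkeeping with standard trace inequalities this yields $\mathfrak{Q}_{h}(\psi^{\perp})\geq -O(h^{1/3})\|\psi^{\perp}\|^{2}-O(h)\|\psi\|^{2}$, whence, together with $-\lambda\geq 1-C_{0}h^{2/3}$, one concludes $\mathfrak{Q}_{h}(\psi^{\perp})-\lambda\|\psi^{\perp}\|^{2}\geq\tfrac{1}{2}\|\psi^{\perp}\|^{2}-O(h)\|\psi\|^{2}$ for $h$ small.

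For the right-hand side of $(\star)$, direct computation gives
$\mathfrak{Q}_{h}(\Pi_{0}\psi)-\lambda\|\Pi_{0}\psi\|^{2}=\int h^{2}|f'|^{2}\,\dx x+\int V_{0}(x)|f|^{2}\,\dx x-E\|f\|^{2}$,
where $V_{0}(x):=2-2e^{-2x}\mathbf{1}_{x>0}\in[0,2]$ satisfies $V_{0}(x)\leq 4x$ on $[0,+\infty)$, and $E:=\lambda+1\in[0,C_{0}h^{2/3}]$. The Agmon estimate $\int x|f|^{2}\,\dx x\leq Ch^{2/3}\|f\|^{2}$ together with the semiclassical kinetic bound $\int h^{2}|f'|^{2}\,\dx x\leq Ch^{2/3}\|\psi\|^{2}$ (both obtained from the eigenvalue equation in combination with Theorem~\ref{firsteigenvalues}) yield $\mathrm{RHS}\leq Ch^{2/3}\|\psi\|^{2}$. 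Combining with the previous paragraph and $(\star)$ gives $\tfrac{1}{2}\|\psi^{\perp}\|^{2}\leq C'h^{2/3}\|\psi\|^{2}$, which is the claim.

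The main obstacle is the lower bound carried out in the second paragraph. The subtlety is that on $(e^{-|y|})^{\perp}$ the fiber operator $-\partial_{y}^{2}-2\delta_{0}$ has infimum of spectrum equal to $0$, so that the pointwise fiber inequality gives only a \emph{non-strict} lower bound $\mathfrak{Q}_{h}^{\mathrm{approx}}(\psi^{\perp})\geq 0$ (where $\mathfrak{Q}_{h}^{\mathrm{approx}}$ replaces $\delta_{\Sigma}$ by $2\delta_{0}(y)$). The strictly positive coefficient of $\|\psi^{\perp}\|^{2}$ in the LHS of $(\star)$ thus comes \emph{entirely} from $-\lambda\approx 1$, and the error produced by the $\delta_{\Sigma}\leftrightarrow 2\delta_{0}(y)$ replacement must be shown to be genuinely smaller than this constant; this is exactly where the Agmon-type localization at the Airy scale $h^{2/3}$, together with careful trace-norm bookkeeping, becomes essential.
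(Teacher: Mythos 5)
Your key identity $(\star)$ is correct and is a clean algebraic observation: since $\mathfrak{B}_h(\psi^\perp,\Pi_0\psi)=\lambda\langle\psi,\Pi_0\psi\rangle-\mathfrak{Q}_h(\Pi_0\psi)=\lambda\|\Pi_0\psi\|^2-\mathfrak{Q}_h(\Pi_0\psi)$, one indeed gets $\mathfrak{Q}_h(\psi^\perp)-\lambda\|\psi^\perp\|^2=\mathfrak{Q}_h(\Pi_0\psi)-\lambda\|\Pi_0\psi\|^2$. Your treatment of the right-hand side is also essentially sound: $h^2\|\partial_x\psi\|^2\leq C_0h^{2/3}\|\psi\|^2$ follows from Proposition~\ref{lb-rQ} together with the eigenvalue bound, $V_0(x)\leq 4x$ for $x\geq 0$ combined with the Agmon weight of~\eqref{Agmon2} gives $\int_0^\infty x\|\psi(x,\cdot)\|^2_{L^2(\R_y)}\dx x\leq Ch^{2/3}\|\psi\|^2$ (since $xe^{-2\eps_0h^{-2/3}x}\lesssim h^{2/3}$), and~\eqref{Agmon1} (or~\eqref{Q-<}) handles $x<0$. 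So $\mathrm{RHS}\leq Ch^{2/3}\|\psi\|^2$.

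The genuine gap is, as you yourself flag, the lower bound on the left-hand side. You need $\mathfrak{Q}_h(\psi^\perp)\geq -(1-c)\|\psi^\perp\|^2-O(h^{2/3})\|\psi\|^2$ for some fixed $c>0$, so that $-\lambda\|\psi^\perp\|^2\approx\|\psi^\perp\|^2$ leaves a strictly positive coefficient. The obstacle is that $e^{-|y|}$ is \emph{not} an eigenfunction of the true fiber operator $\mathfrak{D}_x=-\partial_y^2-\delta_x-\delta_{-x}$ for $x>0$; it is only the eigenfunction of the formal $x=0$ limit $-\partial_y^2-2\delta_0$. Consequently $\psi^\perp_x\perp e^{-|y|}$ gives no information about $\mathfrak{q}_x(\psi^\perp_x)$ directly, and one must absorb the mismatch between the $\Sigma$-trace $|\psi^\perp(x,\pm x)|^2$ and the $\{y=0\}$-trace $2|\psi^\perp(x,0)|^2$. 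Your Taylor–Cauchy–Schwarz estimate converts this into $\int_0^\infty x\,\|\partial_y\psi^\perp(x,\cdot)\|^2_{L^2(\R_y)}\dx x$, but the paper provides no Agmon-type weighted estimate on $\partial_y\psi$ (or $\partial_y\psi^\perp$), only on $\psi$ itself (Proposition~\ref{Agmon}). Establishing such a gradient-weighted localization is nontrivial, and even then the constant in front of $\|\psi^\perp\|^2$ produced by the Young-inequality splitting of the trace terms would have to be shown to stay strictly below $1$; you acknowledge this but do not carry it out. Since this step is where all the difficulty lies, the proposal is not a complete proof.

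By contrast, the paper's proof takes a two-step route that avoids this trace-comparison problem entirely: first it projects onto the $x$-dependent fiber ground state $\tilde u_x$, so that $\mathfrak{q}_x((\Pi^\perp\psi)_x)\geq\mu_2(x)\|\Pi^\perp\psi\|^2_{L^2(\R_y)}$ with the explicit spectral gap $\mu_2(x)-\mu_1(x)\geq\eps_0$ on $[0,1]$ immediately gives $\|\psi-\Pi\psi\|^2\leq Ch^{2/3}\|\psi\|^2$ on $\R^+_x$ (together with~\eqref{Agmon2} for $x>1$); it then passes from $\Pi$ to $\Pi_0$ using the Lipschitz continuity of $x\mapsto\Pi_x$ (a consequence of point~\ref{6} of Proposition~\ref{delta1D}) combined with~\eqref{Agmon2} and~\eqref{Q-<}. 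This neatly separates the tensorization estimate (via the fiber spectral gap) from the comparison $u_x\to u_0$ (via Lipschitz continuity and Airy-scale localization). If you want to salvage your single-step approach, the missing ingredient is precisely an intermediate comparison of $\mathfrak{q}_x$ with $\mathfrak{q}_0$ on the orthogonal complement of $e^{-|y|}$ which keeps uniform control of the constants, and this is exactly the content that the paper replaces by the $\mu_2-\mu_1$ gap and the Lipschitz argument.
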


\paragraph{Remarks on $\delta$-interactions on crossing lines} Let us consider, as in \cite{L13} and after the rescaling \eqref{rescaling}, the following quadratic form, defined for $\psi\in H^1(\R^2)$ by
$$\mathfrak{Q}^{\times}_{h}(\psi)=\int_{\R^2}h^2|\partial_{x}\psi|^2+|\partial_{y}\psi|^2\dx x\dx y-\int_{\R}\left(|\psi(-s,s)|^2+|\psi(s,s)|^2\right)\dx s.$$
The strategy of our proofs can apply modulo straightforward modifications and we get the following asymptotics
$$\mathcal{N}\left(\mathfrak{H}^\times_{h},-\frac{1}{4}-C(h)\right)\underset{h\to 0}{\sim}\frac{2}{\pi h}\int_{x=0}^{+\infty}\sqrt{\left(-\frac{1}{4}-C_0+\left(\frac{1}{2}+\frac{1}{2x}W(xe^{-x})\right)^2\right)_{+}}\dx x.$$
In the same way, we have, for $n\geq 1$,
$$\lambda_{2n}^\times(h)=-1+2^{2/3}z_{\Ai}(n) h^{2/3}+O(h),$$
and 
$$\lambda_{2n-1}^\times(h)=-1+2^{2/3}z_{\Ai'}(n) h^{2/3}+O(h),$$
where $z_{\Ai'}(n)$ is the absolute value of the $n$-th zero (counted in decreasing order) of the derivative of the Airy function.

\paragraph{Philosophy of the proofs} Let us now discuss the general philosophy of the proofs. As suggested by the expression~\eqref{main-op}, the main ingredient in this paper is a dimensional reduction in the spirit of the famous Born-Oppenheimer approximation (see~\cite{BO27, Martinez89, KMSW92}). Such dimensional reductions where used by Balazard-Konlein in~\cite{B85} in a pseudo-differential context (and thus in a very regular framework) to estimate numbers of eigenvalues. Let us also mention the paper by Morame and Truc~\cite{MoTr05} where this kind of questions appears (with a regular electric potential). It turns out that our framework is strongly excluded by the assumptions of~\cite{B85} since the $\delta$-interaction is not even an electric potential. Nevertheless we will see that a pure variational analysis can overturn this difficulty.

\paragraph{Organization of the paper} This paper is organized as follows. In Section~\ref{S2} we introduce the double $\delta$-well in dimension one and we recall their basic spectral properties. In particular we will prove Proposition~\ref{semi-bounded}. Section~\ref{S3} is devoted to the dimensional reduction of $\mathfrak{H}_{h}$ to model operators in dimension one (see Proposition~\ref{comp}). Finally Section~\ref{S4} is concerned with the analysis of one-dimensional operators and with the proof of Theorems~\ref{number},~\ref{firsteigenvalues} and~\ref{tensorisation-x=0}.

\section{Double $\delta$-well}\label{S2}
For $x\geq 0$, we introduce the quadratic form $\mathfrak{q}_{x}$ defined for $\psi\in H^1(\R)$ by
\begin{equation}\label{qx}
\mathfrak{q}_{x}(\psi)=\int_{\R} |\psi'(y)|^2\dx y-|\psi(-x)|^2-|\psi(x)|^2.
\end{equation}
This is standard (see~\cite[Chapter II.2]{AGHH88} and also~\cite{BEKS94}) that $\mathfrak{q}_{x}$ is a semi-bounded and closed quadratic form on $H^1(\R)$. Therefore we may introduce the associated self-adjoint operator denoted by $\mathfrak{D}_{x}$ whose domain is
$$\Dom(\mathfrak{D}_{x})=\left\{\psi\in H^1(\R)\cap H^2(\R\setminus\{\pm x\}) : \lim_{\eps\searrow 0}\left(\psi'(\pm x+\eps)-\psi'(\pm x-\eps)\right)=-\psi(\pm x)\right\}$$
and defined as $\mathfrak{D}_{x}\psi(y)=-\psi''(y)$. We can write formally
$$\mathfrak{D}_{x}=-\partial_{y}^2-\delta_{-x}-\delta_{x}.$$
Let us describe the spectrum of $\mathfrak{D}_{x}$. The following lemma is obvious.
\begin{lemma}
For all $x\geq 0$, the essential spectrum of $\mathfrak{D}_{x}$ is given by
$$\sigma_{\ess}(\mathfrak{D}_{x})=[0,+\infty).$$
\end{lemma}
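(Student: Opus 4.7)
The plan is to establish the two inclusions $[0,+\infty)\subset \sigma_{\ess}(\mathfrak{D}_x)$ and $\sigma_{\ess}(\mathfrak{D}_x)\subset[0,+\infty)$ separately, using only standard tools; this is indeed why the statement is qualified as obvious.

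For the inclusion $[0,+\infty)\subset \sigma_{\ess}(\mathfrak{D}_x)$, I would build an explicit Weyl sequence for each fixed $\lambda\geq 0$. Pick a cutoff $\chi\in C_c^\infty(\R)$ supported in $[1,2]$ with $\|\chi\|_{L^2(\R)}=1$, and set
\[
\psi_n(y)=n^{-1/2}\chi(y/n-3n)\,e^{i\sqrt\lambda\, y}.
\]
For $n$ large, the support of $\psi_n$ lies in $[3n^2,3n^2+2n]$, in particular far away from $\{-x,x\}$, so the $\delta$-terms in $\mathfrak{q}_x$ do not see $\psi_n$. A direct computation gives $\|\psi_n\|_{L^2(\R)}=1$, $\psi_n\rightharpoonup 0$ weakly, and $\|(\mathfrak{D}_x-\lambda)\psi_n\|_{L^2(\R)}=O(n^{-1})$. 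Weyl's criterion then yields $\lambda\in\sigma_{\ess}(\mathfrak{D}_x)$.

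For the reverse inclusion $\sigma_{\ess}(\mathfrak{D}_x)\subset[0,+\infty)$, I would invoke Persson's characterization of the bottom of the essential spectrum:
\[
\inf\sigma_{\ess}(\mathfrak{D}_x)=\sup_{\substack{K\subset\R\\K\text{ compact}}}\ \inf_{\substack{\psi\in C_c^\infty(\R\setminus K)\\ \|\psi\|_{L^2(\R)}=1}} \mathfrak{q}_x(\psi).
\]
Taking any compact $K$ containing the two points $\{-x,x\}$, every test function $\psi\in C_c^\infty(\R\setminus K)$ vanishes near $\pm x$, hence
\[
\mathfrak{q}_x(\psi)=\int_{\R}|\psi'(y)|^2\,\dx y\ \geq\ 0.
\]
This gives $\inf\sigma_{\ess}(\mathfrak{D}_x)\geq 0$. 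Combined with the Weyl sequence argument, the essential spectrum is exactly $[0,+\infty)$. An equivalent alternative is Weyl's stability theorem applied to the resolvent difference $(\mathfrak{D}_x-\zeta)^{-1}-(-\partial_y^2-\zeta)^{-1}$, which is of rank at most two by Krein's formula for point perturbations and hence compact, so $\sigma_{\ess}(\mathfrak{D}_x)=\sigma_{\ess}(-\partial_y^2)=[0,+\infty)$.

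There is no genuine obstacle here: both steps are textbook for Schrödinger operators with compactly supported perturbations, the only small point to check being that the $\delta$-interactions fit this framework via their quadratic form $\mathfrak{q}_x$, which was already recalled before the statement.
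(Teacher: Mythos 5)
Your proof is correct, and in fact the paper offers no proof at all: it simply declares the lemma ``obvious,'' so there is no argument of theirs to compare against. Both of your routes are sound and textbook-standard. The Weyl-sequence computation is fine (the translated wave packets eventually avoid the two interaction points, and $\|(\mathfrak{D}_x-\lambda)\psi_n\|=O(n^{-1})$ while $\psi_n\rightharpoonup 0$). The Persson-principle half is also fine, although for a form-perturbed operator like $\mathfrak{D}_x$ one should at least observe that the decomposition principle applies because the perturbation, encoded in $\mathfrak{q}_x$, is supported in a fixed compact set. The cleanest and most self-contained route is the one you mention at the end: by Krein's resolvent formula for finite-rank boundary perturbations, $(\mathfrak{D}_x-\zeta)^{-1}-(-\partial_y^2-\zeta)^{-1}$ has rank at most $2$, hence is compact, and Weyl's stability theorem gives $\sigma_{\ess}(\mathfrak{D}_x)=\sigma_{\ess}(-\partial_y^2)=[0,+\infty)$ in one stroke, rendering the separate Weyl-sequence construction redundant. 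That is almost certainly the argument the authors have in mind (and is spelled out in the reference \cite{AGHH88} they cite for $\delta$-interactions).
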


\begin{notation}
For $x\geq 0$, we denote by $\mu_{1}(x)$ the lowest eigenvalue of $\mathfrak{D}_{x}$ and by $u_{x}$ the corresponding positive and $L^2$-normalized eigenfunction.
\end{notation}
In fact we can give an explicit expression of the pair $(\mu_{1}(x),u_{x})$. The following proposition is essentially well-known, except maybe its last two points.
\begin{proposition}\label{delta1D}
For $x\geq 0$, we have
$$\mu_{1}(x)=-\left(\frac{1}{2}+\frac{1}{2x}W(xe^{-x})\right)^2.$$
The second eigenvalue $\mu_{2}(x)$ only exists for $x>1$ and is given by
$$\mu_{2}(x)=-\left(\frac{1}{2}+\frac{1}{2x}W(-xe^{-x})\right)^2.$$
By convention we set $\mu_{2}(x)=0$ when $x\leq 1$.
In particular we have the following properties (see illustration in Figure~\ref{F.mu}):
\begin{enumerate}
\item\label{1} $\mu_{1}(x)\underset{x\to 0}{=}-1+2x+O(x^2)$,
\item\label{2} $\mu_{1}(x)\underset{x\to+\infty}{=}-\frac{1}{4}-\frac{e^{-x}}{2}+O(xe^{-2x})$, $\mu_{2}(x)\underset{x\to+\infty}{=}-\frac{1}{4}+\frac{e^{-x}}{2}+O(xe^{-2x})$,
\item\label{3} For all $x\geq 0$, $-1\leq \mu_{1}(x)<-\frac{1}{4}$ and for all $x>1$, $\mu_{2}(x)>-\frac{1}{4}$,
\item\label{4} $\mu_{1}$ admits a unique minimum at $0$,
\item\label{5} For all $x\geq 0$ and all $f \in H^1(\R)$, we have $\mathfrak{q}_{x}(f)\geq -\Vert f\Vert_{L^2(\R)}^2$,
\item\label{6} $R(x):=\Vert\partial_{x}u_{x}\Vert^2_{L^2(\R_{y})}$ defines a bounded function for $ x> 0$.
\item\label{7} $\Vert\partial_{y}u_{x}\Vert^2_{L^2(\R_{y})}$ defines a bounded function for $x\geq 0$. 
\end{enumerate}
\end{proposition}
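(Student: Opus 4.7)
The plan is to exploit the reflection symmetry $y\mapsto -y$, which decouples the eigenvalue problem for $\mathfrak{D}_{x}$ into an even and an odd sector. In each sector, any eigenfunction associated with an eigenvalue $-k^{2}<0$ must be proportional to $e^{-k|y|}$ on $\{|y|>x\}$ and to $\cosh(ky)$ (resp.~$\sinh(ky)$) on $\{|y|<x\}$ in the even (resp.~odd) sector. Imposing continuity at $\pm x$ together with the jump relation $\psi'(\pm x^{+})-\psi'(\pm x^{-})=-\psi(\pm x)$ leads, after elementary manipulation, to the transcendental equations $(2k-1)e^{2kx}=1$ and $(2k-1)e^{2kx}=-1$, respectively. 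Setting $W=x(2k-1)$ rewrites them as $We^{W}=\pm xe^{-x}$, so that the Lambert function produces directly the closed forms for $\mu_{1}$ and $\mu_{2}$. A short analysis at $k=0^{+}$ shows that the odd equation has a positive root if and only if $x>1$, explaining when $\mu_{2}$ exists. Points (1) and (2) are then obtained by Taylor-expanding the transcendental equation around the base solution: near $x=0$ one has $k(0)=1$ and $k(x)=1-x+O(x^{2})$, giving $\mu_{1}(x)=-1+2x+O(x^{2})$; near $x=+\infty$ one writes $k=\tfrac{1}{2}+\eta$ and obtains $2\eta=\pm e^{-x}e^{-2\eta x}$, which iterates to $\eta=\pm\tfrac{1}{2}e^{-x}+O(xe^{-2x})$, yielding the stated asymptotics of $\mu_{1}$ and $\mu_{2}$.

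For point (5), I would rely on the elementary one-dimensional trace bound: for any $f\in H^{1}(\R)$ and $a\in\R$,
\[
|f(a)|^{2}=2\operatorname{Re}\int_{-\infty}^{a}\overline{f}f'\dx y\leq\int_{-\infty}^{a}(|f|^{2}+|f'|^{2})\dx y,
\]
together with its mirror on $(a,+\infty)$. Applying this at $a=-x$ on $(-\infty,-x)$ and at $a=x$ on $(x,+\infty)$ and summing yields $|f(-x)|^{2}+|f(x)|^{2}\leq\int_{\R}(|f|^{2}+|f'|^{2})\dx y$, which is exactly (5). Point (3) then follows: $\mu_{1}(x)\geq -1$ is (5) applied to $u_{x}$, while the upper bound $\mu_{1}<-\tfrac{1}{4}$ and the lower bound $\mu_{2}>-\tfrac{1}{4}$ follow from the sign of $2k-1$ in the two transcendental equations ($k>\tfrac{1}{2}$ in the even case, $k<\tfrac{1}{2}$ in the odd one). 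For (4), implicit differentiation of $2k-1=e^{-2kx}$ yields $k'(x)=-ke^{-2kx}/(1+xe^{-2kx})<0$ for all $x>0$, so $\mu_{1}(x)=-k(x)^{2}$ is strictly increasing on $[0,+\infty)$ and attains its unique minimum at $x=0$.

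The last two assertions concern the ground state itself. Point (7) I would handle via the identity $\mathfrak{q}_{x}(u_{x})=\mu_{1}(x)$ combined with the standard Sobolev bound $|u_{x}(\pm x)|^{2}\leq 2\|u_{x}\|_{L^{2}(\R)}\|\partial_{y}u_{x}\|_{L^{2}(\R)}$; setting $A=\|\partial_{y}u_{x}\|_{L^{2}(\R)}$, the identity reads $A^{2}\leq 4A+\mu_{1}(x)\leq 4A$, forcing $A\leq 4$ uniformly in $x\geq 0$. Point (6) is the most delicate. Starting from the explicit ground-state expression with $C(x)^{2}=(2k-1)/(1+x(2k-1))$ obtained by direct normalization, and with $k(\cdot)$ smooth on $(0,+\infty)$ by the implicit function theorem applied to $2k-1=e^{-2kx}$, I would differentiate $u_{x}$ formally in $x$. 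The main obstacle is the ``moving kink'' at $y=\pm x$: the pointwise derivative $\partial_{x}u_{x}$ inherits a jump there but nevertheless defines an $L^{2}(\R)$ function; one has to check, using the asymptotics $2k-1\sim e^{-x}$ together with the compensating decay $C(x)\sim e^{-x/2}$ at infinity, that each contribution to $\partial_{x}u_{x}$ is square-integrable with norm uniform in $x\in(0,+\infty)$. This reduces to bookkeeping with the explicit formulas and confirms that $R(x)$ stays bounded both as $x\to 0^{+}$ and as $x\to+\infty$.
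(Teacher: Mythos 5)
Your overall approach is the same as the paper's: solve the eigenvalue equation explicitly on each piece of $\R\setminus\{\pm x\}$, split into the even and odd sectors, reduce to the transcendental equations $(2k-1)e^{2kx}=\pm 1$, and invert via the Lambert function. Points (1)--(4) are handled the same way in spirit (the paper proves (4) by noting $W(xe^{-x})=x$ has no positive solution; your monotonicity argument via implicit differentiation of $2k-1=e^{-2kx}$ is an equally valid variant). Two of your arguments are actually \emph{cleaner} than the paper's. For (5), you prove the form bound directly from the one-sided trace inequality $|f(\pm x)|^2\leq\int(|f|^2+|f'|^2)$ applied on the disjoint half-lines $(-\infty,-x)$ and $(x,+\infty)$, and then deduce (3) from it --- the paper runs the implication the other way, reading off $\mu_1\geq -1$ from the Lambert formula and calling (5) ``obvious''. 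For (7), you use only $\mathfrak{q}_x(u_x)=\mu_1(x)$ together with $|u_x(\pm x)|^2\leq 2\Vert u_x\Vert\,\Vert\partial_y u_x\Vert$ to get $A^2\leq 4A$; the paper instead writes $\Vert\partial_y u_x\Vert^2=\mu_1(x)+2/\Vert\psi_{x,1}\Vert^2$ and then needs the lower bound on $\Vert\psi_{x,1}\Vert$ established inside the proof of (6), so your route for (7) is self-contained and independent of the hardest step.

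That hardest step, point (6), is where you stop short. You correctly identify the structure (the ground state is piecewise explicit; the ``moving kink'' at $y=\pm x$ is harmless because $u_x$ is continuous across it, so $\partial_x u_x$ is $L^2$ in $y$; the exponential prefactor $2k-1\sim e^{-x}$ and the normalization $C(x)\sim e^{-x/2}$ must compensate), but you defer the actual estimate to ``bookkeeping with the explicit formulas''. That bookkeeping is precisely the nontrivial content of (6). Concretely, you need (i) a uniform-in-$x$ upper bound $\Vert\partial_x\psi_{x,1}\Vert_{L^2(\R_y)}\leq C'$, which requires absorbing the factors $|x\pm y|$ produced by differentiating the exponents into the decay $e^{-\sqrt{-\mu_1(x)}|x\pm y|}$, using that $\sqrt{-\mu_1(x)}\geq\tfrac12$ and that $|(\sqrt{-\mu_1})'(x)|$ is bounded (the latter you essentially already have from your formula for $k'$); (ii) a uniform-in-$x$ lower bound $\Vert\psi_{x,1}\Vert_{L^2(\R_y)}\geq c>0$, obtained by squeezing $\psi_{x,1}$ between explicit decaying exponentials; and then (iii) the algebraic observation $R(x)\leq 4\Vert\partial_x\psi_{x,1}\Vert^2/\Vert\psi_{x,1}\Vert^2$ coming from $u_x=\psi_{x,1}/\Vert\psi_{x,1}\Vert$ and $|\partial_x\Vert\psi_{x,1}\Vert|\leq\Vert\partial_x\psi_{x,1}\Vert$. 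None of this is conceptually hard, but as written your proposal asserts the conclusion without carrying it out, and (6) is the one item in the proposition that is genuinely quantitative and used later (it feeds the error terms $h^2\tilde R(x)$ in the dimensional reduction), so this gap should be closed.

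Also be careful with your normalization constant: you write $C(x)^2=(2k-1)/(1+x(2k-1))$ ``by direct normalization'', but this is not $\Vert\psi_{x,1}\Vert^{-2}$ as the paper defines $\psi_{x,1}$ (which has $\psi_{x,1}(\pm x)=1$, not unit $L^2$-norm). The claim $C(x)\sim e^{-x/2}$ only makes sense for one specific choice of overall scaling; whichever convention you adopt, spell it out, because the cancellation between the decaying prefactor and the diverging pieces is exactly what makes (6) delicate.
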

\begin{figure}
\includegraphics[width=\textwidth]{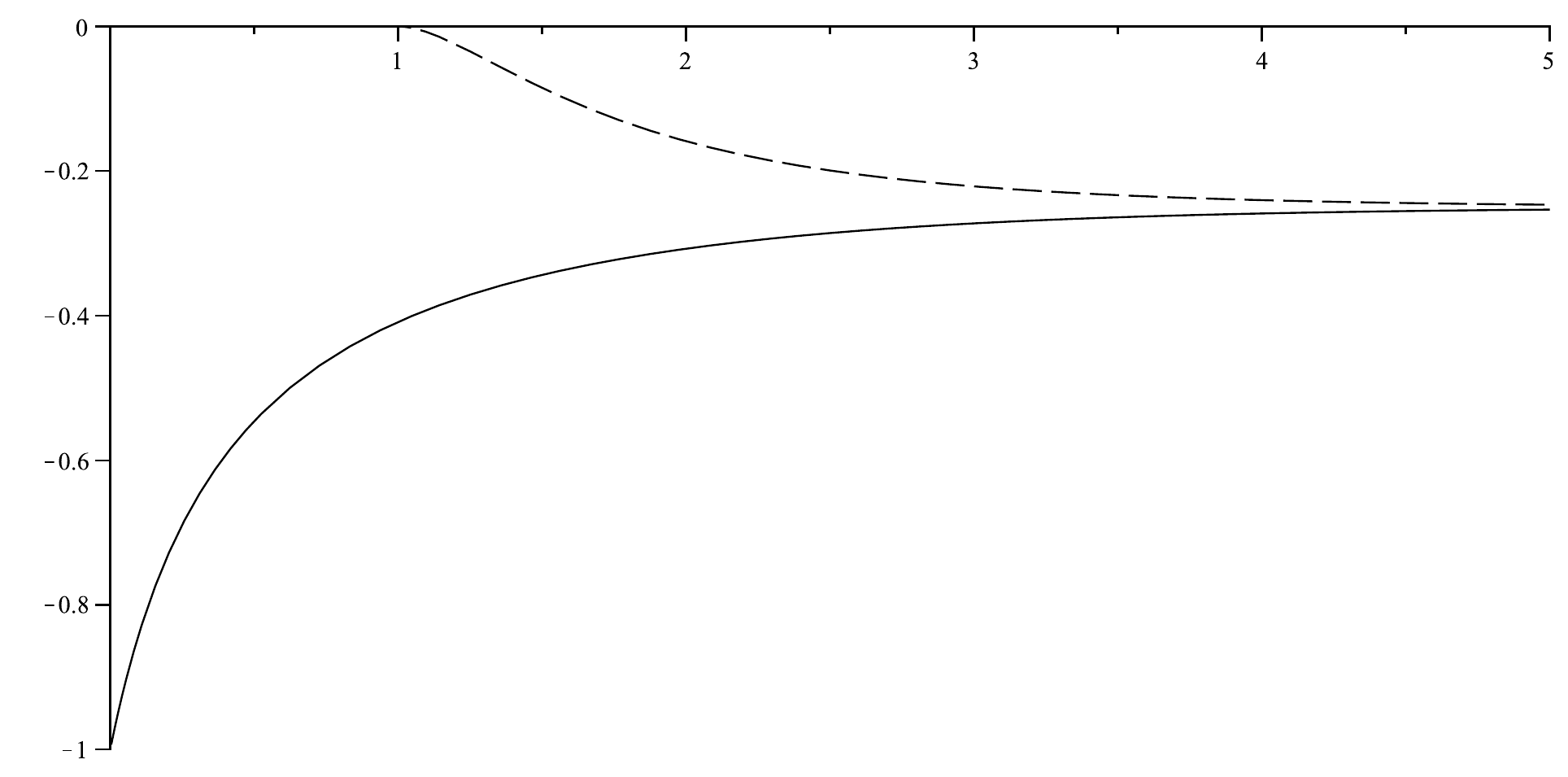}
\caption{The eigenvalues of $\mathfrak{D}_{x}$ as functions of $x$: $\mu_1(x)$ (solid) and $\mu_2(x)$ (dashed).}
\label{F.mu}
\end{figure}
\begin{proof}
Let us solve the eigenvalue equation
$$\mathfrak{D}_{x}\psi_x=-\lambda_x\psi_x.$$
Up to multiplicative constants and using the continuity of the elements of $\Dom(\mathfrak{D}_{x})$ we have the alternative
\[ \psi_x=\psi_{x,1}\text{ or } \psi=\psi_{x,2}.\]
where
\[ \psi_{x,1}(y)=\begin{cases}
e^{\sqrt{\lambda_x}(x+y)} \text{ if } y\leq-x \\
\frac{1}{\cosh(\sqrt{\lambda_x} x)}\cosh(\sqrt{\lambda_x} y)  \text{ if } -x<y<x\\
e^{\sqrt{\lambda_x}(x-y)} \text{ if } y\geq x
\end{cases}\]
and
\[\psi_{x,2}(y)=\begin{cases}
e^{\sqrt{\lambda_x}(x+y)} \text{ if } y\leq-x \\
\frac{-1}{\sinh(\sqrt{\lambda_x} x)}\sinh(\sqrt{\lambda_x} y)  \text{ if } -x<y<x\\
-e^{\sqrt{\lambda_x}(x-y)} \text{ if } y\geq x
\end{cases}.\]
In the case $\psi=\psi_{x,1}$, the condition at $\pm x$ becomes
\[\left( 2\sqrt\lambda_{x,1}-1\right)e^{2\sqrt{\lambda_{x,1}} x}=1,\]
and we see that $\sqrt{\lambda_{x,1}}\geq\frac{1}{2}$.
In terms of the Lambert function, we have
$$\sqrt{\lambda_{x,1}}=\frac{1}{2}+\frac{1}{2x}W(xe^{-x})=:\sqrt{-\mu_1(x)}.$$
In the case $\psi=\psi_{x,2}$ we find in the same way, for $x>1$,
$$\sqrt{\lambda_{x,2}}=\frac{1}{2}+\frac{1}{2x}W(-xe^{-x})=:\sqrt{-\mu_2(x)}.$$
Recall that for $x\in(0,1]$, we set $\sqrt{\lambda_{x,2}}=0$. In addition, we find $\sqrt{\lambda_{x,2}}\leq\frac{1}{2}$ for $x>1$.

This is very standard to establish the points~\ref{1},~\ref{2},~\ref{3}. For the point~\ref{4}, we notice that $\mu_{1}(x)=-1$ for $x>0$ is equivalent to $W(xe^{-x})=x$ which admits no solution for $x>0$. The point~\ref{5} is then obvious.

Let us now prove the point~\ref{6}. We notice that $\psi_{x,1}(y)$ can be rewritten in the form
\begin{multline*}
\psi_{x,1}(y)=H(-x-y)e^{\sqrt{-\mu_1(x)}(x+y)}+H(-x+y)e^{\sqrt{-\mu_1(x)}(x-y)} \\
+ H(x+y)H(x-y)\frac{\cosh(\sqrt{-\mu_1(x)}\, y) }{\cosh(\sqrt{-\mu_1(x)}\, x)},
\end{multline*}
where $H(\cdot)$ is the Heaviside function (with $H(0)=\frac{1}{2}$).
Now, one easily checks that
\[ 0\leq \frac{\cosh(\sqrt{-\mu_1(x)}\, y) }{\cosh(\sqrt{-\mu_1(x)}\, x)} \leq e^{-\sqrt{-\mu_1(x)}(x+y)}+e^{\sqrt{-\mu_1(x)}(y-x)},\]
so that
\[   H(-x-y)e^{\sqrt{-\mu_1(x)}(x+y)}+H(-x+y)e^{\sqrt{-\mu_1(x)}(x-y)}  \leq \psi_{x,1}(y)\leq e^{-\sqrt{-\mu_1(x)}|x+y|}+e^{-\sqrt{-\mu_1(x)}|y-x|} ,\]
and therefore there exist positive constants $c,C$, independent of $x$, such that
\[ 0<c \leq \big\Vert \psi_{x,1}\big\Vert_{L^2(\R_{y})}\leq C<\infty .\]
In the same way, one can check the following estimates:
\begin{align*}
\left|\partial_{x}\left(e^{\sqrt{-\mu_1(x)}(x+y)}\right)\right|\leq \left(|(\sqrt{-\mu_1})'(x)||x+y|+\sqrt{-\mu_1(x)}\right)e^{-\sqrt{-\mu_1(x)}|x+y|},\forall y\leq-x\\
\left|\partial_{x}\left(e^{\sqrt{-\mu_1(x)}(x-y)}\right)\right|\leq  \left(|(\sqrt{-\mu_1})'(x)||x-y|+\sqrt{-\mu_1(x)}\right)e^{-\sqrt{-\mu_1(x)}|x-y|},\forall y\geq x
\end{align*}
and, for $y\in[-x,x]$, 
\[\left|\partial_{x}\left(\frac{\cosh(\sqrt{-\mu_1(x)}y)}{\cosh(\sqrt{-\mu_1(x) x})}\right)\right|\leq \left(e^{-\sqrt{-\mu_1(x)}(x+y)}+e^{\sqrt{-\mu_1(x)}(y-x)}\right)\left(2x|(\sqrt{-\mu_1})'(x)|+\sqrt{-\mu_1(x)}\right).\]
Therefore, we deduce that there exists a positive constant, $C'$, independent of $x$, such that
\[  \big\Vert \partial_x\psi_{x,1} \big\Vert^2_{L^2(\R_{y})}\leq C'<\infty .\]
By definition, $u_x(y)=\frac{\psi_{x,1}(y)}{\Vert\psi_{x,1} \Vert_{L^2(\R_{y})} }$. It follows by elementary computations that
\[R(x)\leq 4\frac{\Vert\partial_{x}\psi_{x,1}\Vert^2_{L^2(\R_{y})}}{\Vert \psi_{x,1} \Vert_{L^2(\R_{y})}^2} ,\]
and the point $6$ is proved.

Finally, one obtains the point $7$ by remarking
\[ \Vert\partial_{y}u_{x}\Vert^2_{L^2(\R_{y})} \ = \ \mu_1(x)+|u_x(x)|^2+|u_x(-x)|^2 \ = \ \mu_1(x)+\frac2{\Vert \psi_{x,1} \Vert_{L^2(\R_{y})}^2}.\]
\end{proof}
As a direct application of Proposition~\ref{delta1D}, we have
\begin{proposition}\label{lb-rQ}
For all $\psi\in H^1(\R^2)$ and for all $h>0$, we have:
$$\mathfrak{Q}_{h}(\psi)\geq\int_{\R_{x}}\int_{\R_{y}} \left(h^2|\partial_{x}\psi|^2+\tilde\mu_{1}(x)|\psi|^2\right)\dx y\dx x,$$
where $\tilde\mu_{1}(x)=\mu_{1}(x)$, for $x\geq 0$ and $0$ elsewhere. In particular, we have:
$$\mathfrak{Q}_{h}(\psi)\geq-\Vert\psi\Vert^2$$
or equivalently, for all $\theta\in\left(0,\frac{\pi}{2}\right)$:
$$\mathcal{Q}_{\theta,\alpha}(\psi)\geq -\frac{\alpha^2}{\cos^2\theta}\Vert\psi\Vert^2.$$
\end{proposition}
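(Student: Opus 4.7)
The plan is to perform a Fubini-type slicing in the $y$-variable, so that each vertical slice reduces to the one-dimensional double $\delta$-well treated in Proposition~\ref{delta1D}. First I would rewrite the $\delta$-term by parametrizing $\Sigma_{\pi/4}$ explicitly: splitting $\int_\R|\psi(|s|,s)|^2\dx s$ into $s>0$ and $s<0$ and performing the change of variable $s\mapsto -s$ in the second piece yields
$$\int_{\R}|\psi(|s|,s)|^2\dx s=\int_{0}^{+\infty}\bigl(|\psi(x,x)|^2+|\psi(x,-x)|^2\bigr)\dx x.$$
Thus the $\delta$-mass, which a priori lives on a one-dimensional curve, is neatly distributed along the $x$-axis.

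Second, for each fixed $x\geq 0$, the partial integral in $y$ reconstructs exactly the quadratic form of $\mathfrak{D}_x$:
$$\int_{\R}|\partial_{y}\psi(x,y)|^2\dx y-|\psi(x,x)|^2-|\psi(x,-x)|^2=\mathfrak{q}_{x}\bigl(\psi(x,\cdot)\bigr),$$
whereas for $x<0$ no $\delta$-contribution appears and $\int_\R|\partial_y\psi(x,y)|^2\dx y\geq 0=\tilde\mu_1(x)\Vert\psi(x,\cdot)\Vert_{L^2(\R_y)}^2$. Applying the min-max characterization of $\mu_1(x)$ slicewise (i.e.\ $\mathfrak{q}_x(f)\geq \mu_1(x)\Vert f\Vert^2_{L^2(\R)}$), then integrating in $x$ while keeping the untouched $h^2|\partial_x\psi|^2$ term, produces the first asserted inequality.

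Third, since $\tilde\mu_1(x)\geq -1$ for every $x\in\R$ (point~\ref{3} of Proposition~\ref{delta1D} for $x\geq 0$, and $\tilde\mu_1\equiv 0 \geq -1$ for $x<0$), discarding the nonnegative $h^2|\partial_x\psi|^2$ contribution immediately yields $\mathfrak{Q}_h(\psi)\geq -\Vert\psi\Vert^2$. Finally, the $\mathcal{Q}_{\theta,\alpha}$ statement follows from the unitary equivalence stated before Notation~\ref{lan}: with $h=\tan\theta$ one has $\mathcal{Q}_{\theta,\alpha}(\psi)=\alpha^2(1+h^2)\,\mathfrak{Q}_h(U_{\theta,\alpha}\psi)$, so the unitarity of $U_{\theta,\alpha}$ together with $1+\tan^2\theta=\cos^{-2}\theta$ gives the announced factor $\alpha^2/\cos^2\theta$.

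I do not anticipate a real obstacle: the argument is essentially a clean fiber decomposition. The only point that requires a bit of care is the parametrization of the broken line and the Fubini step, in order to ensure that the slicewise application of point~\ref{5} of Proposition~\ref{delta1D} is rigorous for every $\psi\in H^1(\R^2)$ (in particular that the trace $\psi(x,\cdot)$ lies in $H^1(\R_y)$ for almost every $x$, which is standard).
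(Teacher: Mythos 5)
Your proof is correct and follows essentially the same approach as the paper: the same reparametrization of the $\delta$-term over $x>0$, the same Fubini slicing in $x$ identifying the fiber quadratic form $\mathfrak{q}_x$, the same slicewise min-max lower bound $\mathfrak{q}_x(\psi(x,\cdot))\geq\mu_1(x)\Vert\psi(x,\cdot)\Vert^2_{L^2(\R_y)}$ together with positivity of $\int|\partial_y\psi|^2$ for $x<0$, and the same deduction of the two corollaries from $\tilde\mu_1\geq -1$ and the unitary equivalence with $1+\tan^2\theta=\cos^{-2}\theta$. Nothing genuinely different here; your remark on the a.e.\ regularity of the trace $\psi(x,\cdot)\in H^1(\R_y)$ is a welcome explicit justification of a step the paper leaves implicit.
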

\begin{proof}
For $\psi\in H^1(\R^2)$, we have:
$$\mathfrak{Q}_{h}(\psi)=\int_{\R^2} h^2|\partial_{x}\psi|^2+|\partial_{y}\psi|^2 \dx x \dx y-\int_{\R} |\psi(|s|,s)|^2\dx s$$
so that:
\begin{multline*}
\mathfrak{Q}_{h}(\psi)=\int_{x\in\R^{+}}\left(\int_{\R_{y}} h^2|\partial_{x}\psi|^2\dx y+\int_{\R_{y}}|\partial_{y}\psi|^2\dx y - |\psi(-x,x)|^2- |\psi(x,x)|^2\right)\dx x\\
+\int_{x\in\R^{-}}\int_{\R_{y}} h^2|\partial_{x}\psi|^2+|\partial_{y}\psi|^2\dx y\dx x.
\end{multline*}
We infer that:
\begin{equation*}
\mathfrak{Q}_{h}(\psi)\geq\int_{x\in\R^{+}}\int_{\R_{y}} \left(h^2|\partial_{x}\psi|^2+\mu_{1}(x)|\psi|^2\right)\dx y\dx x+\int_{x\in\R^{-}}\int_{\R_{y}} h^2|\partial_{x}\psi|^2\dx y\dx x,
\end{equation*}
and the conclusions follow.
\end{proof}

\section{Spectral reductions}\label{S3}
Now we would like to use the spectral theory of $\mathfrak{D}_{x}$ in order to compare the operator $\mathfrak{H}_{h}$ with simpler operators. 
\subsection{Dimensional reduction}
In order to deal with the singularity at $x=0$, we introduce the following extension of $u_{x}$.
\begin{notation}
Let us define
$$\tilde u_{x}(y)=\begin{cases}
u_{x}(y) \text{ if } x\geq 0\\
u_{0}(y)  \text{ if } x<0\\
\end{cases}.$$
We also introduce the projections defined for $\psi\in L^2(\R^2)$ by
$$\Pi\psi(x,y)=\langle\psi,\tilde u_{x}\rangle_{L^2(\R_{y})}\tilde u_{x}(y),\quad \Pi^\perp\psi(x,y)=\psi(x,y)-\Pi\psi(x,y).$$
If $\varphi=\varphi(x,y)$, we denote $\varphi_{x}(y)=\varphi(x,y)$.
\end{notation}

\begin{lemma}\label{op-red}
For all $\psi\in\Dom(\mathfrak{Q}_{h})$, the function $\Pi\psi$ belongs to $\Dom(\mathfrak{Q}_{h})$ and we have
$$\mathfrak{Q}_{h}(\Pi\psi)=\int_{\R_{x}} h^2|f'(x)|^2+(\hat\mu_{1}(x)+h^2\tilde R(x))|f(x)|^2  \dx x,\quad \mbox{ with } f(x)=\langle\psi, \tilde u_{x}\rangle_{L^2(\R_{y})},$$
where $\hat\mu_{1}(x)=\mu_{1}(x)$ for $x\geq 0$ and $\hat\mu_{1}(x)=1$ for $x<0$ and $\tilde R(x)=R(x)$ for $x> 0$ and $\tilde R(x)=0$ for $x\leq 0$.
\end{lemma}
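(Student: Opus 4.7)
The plan is to substitute the ansatz $\Pi\psi(x,y)=f(x)\tilde u_x(y)$ directly into $\mathfrak{Q}_h$ and exploit three structural facts: the normalization $\|\tilde u_x\|_{L^2(\R_y)}=1$, which will kill any $\langle\tilde u_x,\partial_x\tilde u_x\rangle_{L^2(\R_y)}$ cross term; the eigenvalue equation $\mathfrak{D}_x u_x=\mu_1(x)u_x$ for $x\geq 0$, which converts the $y$-part of the quadratic form into $\mu_1(x)|f(x)|^2$; and the uniform bounds from Proposition~\ref{delta1D} (items~\ref{6} and~\ref{7}) that control $R(x)$ and $\|\partial_y\tilde u_x\|_{L^2(\R_y)}^2$.

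First I would check membership. Given $\psi\in H^1(\R^2)$, Cauchy--Schwarz in $y$ gives $f\in L^2(\R)$ with distributional derivative
\[f'(x)=\langle\partial_x\psi,\tilde u_x\rangle_{L^2(\R_y)}+\langle\psi,\partial_x\tilde u_x\rangle_{L^2(\R_y)},\]
the second term lying in $L^2(\R_x)$ thanks to the uniform bound on $\tilde R$. The explicit Lambert-function formulas produced in the proof of Proposition~\ref{delta1D} show that $\tilde u_x$ is continuous in $x$ across $x=0$, so $\partial_x\tilde u_x$ carries no distributional jump at $0$ and coincides with $\mathbf 1_{x>0}\partial_x u_x$. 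Consequently $\Pi\psi\in H^1(\R^2)$ with $\partial_x(\Pi\psi)=f'(x)\tilde u_x(y)+f(x)\partial_x\tilde u_x(y)$ and $\partial_y(\Pi\psi)=f(x)\partial_y\tilde u_x(y)$.

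Then I would expand $\mathfrak{Q}_h(\Pi\psi)$ slice by slice in $x$. For the $x$-derivative part, the orthogonality $\langle\tilde u_x,\partial_x\tilde u_x\rangle_{L^2(\R_y)}=\tfrac12\partial_x\|\tilde u_x\|^2_{L^2(\R_y)}=0$ eliminates the cross term, leaving $\int_{\R_y}|\partial_x\Pi\psi|^2\dx y=|f'(x)|^2+|f(x)|^2\tilde R(x)$ for a.e.\ $x$. For the $y$-derivative and the $\delta$-trace, the change of variables $s\mapsto\pm s$ gives $\int_\R|\Pi\psi(|s|,s)|^2\dx s=\int_0^{+\infty}|f(x)|^2(|u_x(x)|^2+|u_x(-x)|^2)\dx x$, so one is reduced to the pointwise quantity $\|\partial_y\tilde u_x\|^2_{L^2(\R_y)}-\mathbf 1_{x\geq 0}(|\tilde u_x(x)|^2+|\tilde u_x(-x)|^2)$. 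For $x\geq 0$ this is exactly $\mathfrak q_x(u_x)=\mu_1(x)$ by the eigenvalue equation, while for $x<0$ there is no $\delta$-contribution and $\tilde u_x=u_0=e^{-|y|}$ gives $\|\partial_y u_0\|^2_{L^2(\R_y)}=1=\hat\mu_1(x)$. Assembling these pieces and integrating in $x$ produces the announced formula.

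The main obstacle I anticipate is the behavior at $x=0$, where the prefactor $\tilde u_x$ is only piecewise smooth in $x$. The critical point to secure is that $\partial_x\tilde u_x$ contains no Dirac mass at $0$, i.e.\ $u_x\to u_0$ in $L^2(\R_y)$ as $x\to 0^+$, together with the fact that this convergence is compatible with the bounded derivative $R(x)$ on $(0,+\infty)$. Both follow at once from the explicit formula for $\psi_{x,1}$ computed in the proof of Proposition~\ref{delta1D}, after which the remainder of the argument is pure Fubini and orthogonality bookkeeping.
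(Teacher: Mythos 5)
Your proposal is correct and follows essentially the same route as the paper's proof: uniform bounds on $R(x)$ and $\|\partial_y\tilde u_x\|^2_{L^2(\R_y)}$ from Proposition~\ref{delta1D} to secure $\Pi\psi\in H^1(\R^2)$, then Fubini together with the normalization identity $\langle\tilde u_x,\partial_x\tilde u_x\rangle_{L^2(\R_y)}=0$, the eigenvalue relation $\mathfrak q_x(u_x)=\mu_1(x)$ for $x\geq0$, and $\|\partial_y u_0\|^2_{L^2(\R_y)}=1$ for $x<0$ to assemble the one-dimensional quadratic form. Your explicit remark that continuity of $x\mapsto\tilde u_x$ at $x=0$ (read off the Lambert-function formulas) rules out a Dirac contribution in $\partial_x\tilde u_x$ is a point the paper leaves implicit, but it does not change the structure of the argument.
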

\begin{proof} Recall that $\Dom(\mathfrak{Q}_{h})=H^1(\R^2)$. By Proposition~\ref{delta1D}, one has
\[  \esssup_{x\in\R} \Vert\partial_{x}\tilde u_{x}\Vert^2_{L^2(\R_{y})} \ = \ \sup_{x>0} R(x) \ < \ \infty \]
and
\[  \esssup_{x\in\R} \Vert\partial_{y}\tilde u_{x}\Vert^2_{L^2(\R_{y})} \ = \ \sup_{x\geq 0} \Vert\partial_{y}\tilde u_{x}\Vert^2_{L^2(\R_{y})}  \ < \ \infty.\]
It follows immediately that, for any $\psi\in H^1(\R^2)$,
\[ \partial_x \big(\Pi\psi) \ = \ f(x)\partial_x \tilde u_{x}(y) +f'(x)\tilde u_{x}(y)\in L^2(\R^2),\]
since $\esssup_{x\in\R}  f'(x)\leq\esssup_{x\in\R}  \langle\psi, \partial_x \tilde u_{x}\rangle_{L^2(\R_{y})}+\esssup_{x\in\R}  \langle \partial_x \psi, \tilde u_{x}\rangle_{L^2(\R_{y})}<\infty$, and
\[ \partial_y \big(\Pi\psi) \ = \ f(x)\partial_y \tilde u_{x}(y) \in L^2(\R^2).\]
Thus one has $\Pi\psi\in H^1(\R^2)=\Dom(\mathfrak{Q}_{h})$, and the calculations thereafter are valid. By definition, one has
\begin{align*}
\mathfrak{Q}_{h}(\Pi\psi)&=\int_{\R^2}\!\! h^2 |f(x)\partial_x \tilde u_{x}(y) +f'(x)\tilde u_{x}(y)|^2 +|f(x)|^2|\partial_y \tilde u_{x}(y)|^2\dx x\dx y  -\int_{\R}|f(|s|)\tilde u_{|s|}(s)|^2 \dx s\\
&=\int_{\R_x} h^2 |f'(x)|^2+h^2|f(x)|^2\Vert \partial_x \tilde u_{x}(y)\Vert_{L^2(\R_y)}^2\dx x +\int_{\R_x^-}|f(x)|^2\int_{\R_y}|\partial_y \tilde u_{x}(y)|^2\dx y \dx x\\
&\qquad +\int_{\R_x^+}|f(x)|^2\int_{\R_y}|\partial_y \tilde u_{x}(y)|^2- \left(|\tilde u_{x}(-x)|^2+|\tilde u_{x}(x)|^2\right) \dx x\\
&  =\int_{\R_{x}} h^2|f'(x)|^2\dx x+\int_{\R_{x}^+} h^2  |f(x)|^2R(x) \dx x+\int_{\R_{x}^-} |f(x)|^2  \dx x+\int_{\R_{x}^+} |f(x)|^2 \mu_{1}(x) \dx x ,
\end{align*}
where we used Fubini's theorem, and the following properties on $\tilde u_{x}(y)$:
\begin{itemize}
\item $\forall x\in \R$, $\tilde u_{x}$ is normalized in $L^2(\R_y)$, and in particular, for any $x\neq 0$, 
\[ 2\langle \tilde u_{x},\partial_x \tilde u_{x}\rangle_{L^2(\R_{y})}=\frac{\dx}{\dx x}\langle \tilde u_{x},\tilde u_{x}\rangle_{L^2(\R_{y})} =0.\]
\item $\forall x>0$, one has $\mathfrak{q}_{x}(\tilde u_{x})=\mu_1(x)$.
 \item $\forall x\leq 0$, one has $\int_{\R_y}|\partial_y \tilde u_{x}(y)|^2\dx y=\int_{\R_y}|\partial_y  u_{0}(y)|^2\dx y=1$.
\end{itemize}
The result is now straightforward.
\end{proof}
We get the same result for the corresponding bilinear form $\mathfrak{B}_{h}$.
\begin{lemma}\label{op-red-bil}
For all $\psi_{1},\psi_{2}\in\Dom(\mathfrak{Q}_{h})$, we have
$$\mathfrak{B}_{h}(\Pi\psi_{1},\Pi\psi_{2})=\int_{\R_{x}} h^2f_{1}'(x)f'_{2}(x)+(\hat\mu_{1}(x)+h^2\tilde R(x))f_{1}(x) f_{2}(x)  \dx x,$$
with $f_{j}(x)=\langle\psi_{j}, \tilde u_{x}\rangle_{L^2(\R_{y})}$.
\end{lemma}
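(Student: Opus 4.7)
The plan is to deduce this bilinear identity from its quadratic counterpart in Lemma~\ref{op-red} by polarization. The proof of Lemma~\ref{op-red} already establishes that $\Pi$ sends $\Dom(\mathfrak{Q}_{h}) = H^1(\R^2)$ into itself, hence $\Pi\psi_1 + i^k\Pi\psi_2 \in \Dom(\mathfrak{Q}_{h})$ for every $k\in\{0,1,2,3\}$. Moreover, since $\tilde u_{x}$ is fixed and real-valued, the maps $\psi\mapsto \Pi\psi$ and $\psi\mapsto f:=\langle \psi,\tilde u_{x}\rangle_{L^2(\R_{y})}$ are both $\mathbb{C}$-linear, so that $\Pi(\psi_1+i^k\psi_2)=\Pi\psi_1+i^k\Pi\psi_2$ and the associated one-dimensional function is $f_1+i^k f_2$.

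Applying the standard sesquilinear polarization identity
\[
\mathfrak{B}_{h}(\Pi\psi_1,\Pi\psi_2) \ = \ \frac{1}{4}\sum_{k=0}^{3} i^{-k}\,\mathfrak{Q}_{h}\!\left(\Pi\psi_1+i^k\Pi\psi_2\right),
\]
invoking Lemma~\ref{op-red} on each of the four terms on the right-hand side (with $f_1+i^k f_2$ in place of $f$), and then re-collecting via the same polarization applied to the elementary one-dimensional sesquilinear forms $(f_1,f_2)\mapsto \int_{\R_{x}} h^2 f_1'\overline{f_2'}\dx x$ and $(f_1,f_2)\mapsto \int_{\R_{x}}(\hat\mu_1+h^2\tilde R)f_1\overline{f_2}\dx x$, yields the claimed identity.

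If a more direct route is preferred, one may redo the computation of Lemma~\ref{op-red} term by term: substituting $\partial_{x}(\Pi\psi_j) = f_j'\tilde u_{x}+f_j\partial_{x}\tilde u_{x}$ and $\partial_{y}(\Pi\psi_j) = f_j\partial_{y}\tilde u_{x}$ into the definition of $\mathfrak{B}_{h}$, using Fubini, and invoking the identities $\|\tilde u_{x}\|_{L^2(\R_{y})}=1$, $\langle \tilde u_{x},\partial_{x}\tilde u_{x}\rangle_{L^2(\R_{y})}=0$ for $x\neq 0$, $\mathfrak{q}_{x}(\tilde u_{x})=\mu_1(x)$ for $x>0$, and $\|\partial_{y} u_{0}\|_{L^2(\R_{y})}^2=1$. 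The off-diagonal cross-terms proportional to $\langle \partial_{x}\tilde u_{x},\tilde u_{x}\rangle_{L^2(\R_{y})}$ vanish identically, leaving precisely the stated formula. No genuine obstacle arises here; the only substantive ingredients---membership of $\Pi\psi_j$ in $H^1(\R^2)$ and the listed identities on $\tilde u_{x}$---are already available from the proof of Lemma~\ref{op-red}.
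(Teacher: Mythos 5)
Your second route (redoing the computation of Lemma~\ref{op-red} term by term and noting that the cross terms $\langle\tilde u_x,\partial_x\tilde u_x\rangle_{L^2(\R_y)}$ vanish) is precisely what the paper has in mind: the authors state the bilinear version without proof, leaving the reader to repeat the quadratic-form calculation with two factors. That route is correct and complete.

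The polarization route needs one small clarification. Throughout the paper the pairing $\langle\psi_1,\psi_2\rangle_{L^2(\R_y)}=\int\psi_1\psi_2\,\dx y$ carries no complex conjugation, and the statement you are proving has $f_1 f_2$ and $f_1'f_2'$ rather than $f_1\overline{f_2}$; that is, $\mathfrak{B}_h$ is being treated as a symmetric bilinear form over real-valued functions. The identity
\[
\mathfrak{B}_h(\Pi\psi_1,\Pi\psi_2)\ =\ \tfrac14\sum_{k=0}^{3}i^{-k}\,\mathfrak{Q}_h(\Pi\psi_1+i^k\Pi\psi_2)
\]
is the polarization formula for a \emph{Hermitian sesquilinear} form and, applied to the quadratic form $\mathfrak{Q}_h$ (which involves moduli $|\cdot|^2$), would produce the conjugated integrand $f_1'\overline{f_2'}$ and $f_1\overline{f_2}$, not the unconjugated products in the statement. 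The two of course coincide for real-valued $\psi_j$, which is the paper's implicit setting; but to match the stated formula one should either restrict to real functions and use the real polarization identity $\mathfrak{B}_h(u,v)=\tfrac14[\mathfrak{Q}_h(u+v)-\mathfrak{Q}_h(u-v)]$, or simply note that the statement's unconjugated form is equivalent to the sesquilinear one in the real case. With that caveat, both of your arguments are sound, and the direct computation is the one that matches the paper's (unwritten) proof.
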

Let us now use the orthogonal decomposition to bound $\mathfrak{Q}_{h}$ from below.
\begin{proposition}\label{dimensional-reduction-lb}
For all $\psi\in\Dom(\mathfrak{Q}_{h})$ and all $\eps\in(0,1)$, we have
\begin{multline*}
\mathfrak{Q}_{h}(\psi)\geq \int_{\R_x} (1-\eps)h^2|f'(x)|^2+\big(\tilde\mu_1(x)-4\eps^{-1} h^2 \tilde R(x)\big)|f(x)|^2 \dx x\\
+\int_{\R_x} (1-\eps)h^2\Vert\partial_{x}\Pi^\perp\psi\Vert_{L^2(\R_{y})}^2+\big(\tilde\mu_2(x)-4\eps^{-1} h^2 \tilde R(x)\big)\Vert\Pi^\perp\psi\Vert_{L^2(\R_y)}^2 \dx x,
\end{multline*}
where $\tilde\mu_{i}(x)=\mu_{i}(x)$ for $x\geq 0$ and $\tilde\mu_{i}(x)=0$ for $x<0$ ($i\in\{1,2\}$); $\tilde R(x)=R(x)$ for $x>0$ and $\tilde R(x)=0$ for $x\leq 0$.
\end{proposition}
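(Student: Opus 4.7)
The plan is to perform a slice-by-slice analysis of $\mathfrak{Q}_{h}(\psi)$ in the $x$-variable, after decomposing each fiber via $\psi(x,\cdot)=f(x)\tilde u_{x}+\Pi^\perp\psi(x,\cdot)$ with $f(x)=\langle\psi,\tilde u_{x}\rangle_{L^2(\R_{y})}$. Since the support $\Sigma_{\pi/4}=\{(|s|,s):s\in\R\}$ is contained in $\overline{\R_{x}^+}\times\R_{y}$, we will use the splitting
\[ \mathfrak{Q}_{h}(\psi) \ = \ \int_{\R_{x}}h^2\|\partial_{x}\psi(x,\cdot)\|^2_{L^2(\R_{y})}\,\dx x+\int_{\R_{x}^+}\mathfrak{q}_{x}(\psi(x,\cdot))\,\dx x+\int_{\R_{x}^-}\|\partial_{y}\psi(x,\cdot)\|^2_{L^2(\R_{y})}\,\dx x. \]
Two orthogonality identities will be used repeatedly: $\langle\tilde u_{x},\partial_{x}\tilde u_{x}\rangle_{L^2(\R_{y})}=0$ (from differentiating $\|\tilde u_{x}\|_{L^2(\R_{y})}=1$) and $\langle\tilde u_{x},\partial_{x}\Pi^\perp\psi\rangle_{L^2(\R_{y})}=-\langle\partial_{x}\tilde u_{x},\Pi^\perp\psi\rangle_{L^2(\R_{y})}$ (from differentiating $\langle\tilde u_{x},\Pi^\perp\psi\rangle_{L^2(\R_{y})}=0$).

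For the slice form at $x>0$, bilinearity of $\mathfrak{q}_{x}$ yields $\mathfrak{q}_{x}(\psi(x,\cdot))=\mu_{1}(x)f(x)^2+2f(x)\mathfrak{q}_{x}(u_{x},\Pi^\perp\psi(x,\cdot))+\mathfrak{q}_{x}(\Pi^\perp\psi(x,\cdot))$; the middle term vanishes because $\mathfrak{q}_{x}(u_{x},v)=\mu_{1}(x)\langle u_{x},v\rangle_{L^2(\R_{y})}$ for every $v\in H^1(\R)$ and $\Pi^\perp\psi(x,\cdot)\perp u_{x}$ by construction. The min-max principle applied to $\mathfrak{D}_{x}$ restricted to $u_{x}^\perp$ (with the convention $\mu_{2}(x)=0$ when no second bound state exists) then gives $\mathfrak{q}_{x}(\Pi^\perp\psi(x,\cdot))\geq\mu_{2}(x)\|\Pi^\perp\psi(x,\cdot)\|^2_{L^2(\R_{y})}$. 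For the $x$-derivative at $x>0$, I would expand $\partial_{x}\psi=f'(x)u_{x}+f(x)\partial_{x}u_{x}+\partial_{x}\Pi^\perp\psi$; the cross term between $f'u_{x}$ and $f\partial_{x}u_{x}$ vanishes by the first orthogonality identity, while the two remaining cross terms are controlled by Cauchy--Schwarz (using $\|\partial_{x}u_{x}\|^2_{L^2(\R_{y})}=R(x)$) together with Young's inequality, producing losses of the form $\eps\cdot(\text{main term})+\eps^{-1}R(x)\cdot(\text{lower-order term})$. The factor $4\eps^{-1}$ in the statement is a safe majorant of the $\eps^{-1}$ that actually appears.

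The case $x\leq 0$ is essentially free: since $\tilde u_{x}\equiv u_{0}$ is independent of $x$, one has $\partial_{x}\tilde u_{x}=0$ and a direct expansion gives $\|\partial_{x}\psi\|^2_{L^2(\R_{y})}=|f'(x)|^2+\|\partial_{x}\Pi^\perp\psi\|^2_{L^2(\R_{y})}$ with no cross terms (the mixed term is $2f'(x)\langle u_{0},\partial_{x}\Pi^\perp\psi\rangle_{L^2(\R_{y})}=0$ by the second orthogonality identity and $\partial_{x}u_{0}=0$). The slice carries no delta contribution and $\|\partial_{y}\psi\|^2_{L^2(\R_{y})}\geq 0$, hence the slice lower bound is at least $h^2|f'(x)|^2+h^2\|\partial_{x}\Pi^\perp\psi\|^2_{L^2(\R_{y})}$, which matches the proposition on $\R_{x}^-$ (where $\tilde\mu_{1}=\tilde\mu_{2}=\tilde R=0$) with the $(1-\eps)$ factor absorbed trivially. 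Integrating the pointwise-in-$x$ estimates over $\R$ concludes the argument. The main technical obstacle is the Young bookkeeping at $x>0$ that produces the factor $4\eps^{-1}h^2 R(x)$ uniformly in $\eps\in(0,1)$; a minor subtlety is ensuring that the spectral bound $\mathfrak{q}_{x}(\Pi^\perp\psi(x,\cdot))\geq\mu_{2}(x)\|\Pi^\perp\psi(x,\cdot)\|^2_{L^2(\R_{y})}$ is read with the uniform convention $\mu_{2}(x)=0$ for $x\in(0,1]$.
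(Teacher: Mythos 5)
Your proposal is correct and follows the same overall strategy as the paper: split $\mathfrak{Q}_h$ slice-by-slice, decompose each fiber via $\Pi$ and $\Pi^\perp$, use the min--max principle for $\mathfrak{q}_x$ on the orthogonal complement of $u_x$ (with the convention $\mu_2\equiv 0$ for $x\in(0,1]$), and control the $x$-derivative cross terms via Cauchy--Schwarz and Young. The one place where you diverge from the paper is the $\partial_x$ term: the paper first uses the orthogonal splitting $\|\partial_x\psi\|^2=\|\Pi\partial_x\psi\|^2+\|\Pi^\perp\partial_x\psi\|^2$ and then rewrites both pieces through the commutator $\mathcal R=[\partial_x,\Pi]\psi$, applying the elementary inequality $\|a\pm b\|^2\geq(1-\eps)\|a\|^2-(\eps^{-1}-1)\|b\|^2$ together with the bound $\|\mathcal R\|^2\leq 2R(x)\|\psi_x\|^2$, which is where the factor $4(\eps^{-1}-1)\leq 4\eps^{-1}$ arises. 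You instead expand $\partial_x\psi=f'u_x+f\,\partial_x u_x+\partial_x(\Pi^\perp\psi)$ directly (a non-orthogonal decomposition), kill the $f'f$ cross term via $\langle u_x,\partial_x u_x\rangle=0$, and Young the two remaining cross terms with the help of $\langle u_x,\partial_x\Pi^\perp\psi\rangle=-\langle\partial_x u_x,\Pi^\perp\psi\rangle$; this is slightly more elementary, avoids the commutator formalism, and in fact produces the sharper constant $\eps^{-1}$ in place of $4\eps^{-1}$ (you correctly note the stated $4\eps^{-1}$ is then a harmless majorant). Both routes rely on the same two ingredients, namely $\Vert\partial_x\tilde u_x\Vert_{L^2(\R_y)}^2=R(x)$ and the differentiated normalization/orthogonality identities, so the difference is one of bookkeeping rather than substance. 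A minor omission is that you do not explicitly record $\Pi\psi,\Pi^\perp\psi\in H^1(\R^2)$ (which justifies the pointwise-in-$x$ manipulations), but this is established in Lemma~\ref{op-red} and your orthogonality identities implicitly use the same facts.
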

\begin{proof} 
By definition, one has for any $\psi\in\Dom(\mathfrak{Q}_{h})=H^1(\R^2)$,
\[
\mathfrak{Q}_{h}(\psi)=\int_{\R^2} h^2 |\partial_x \psi |^2\dx x\dx y+\int_{\R_x^-\times \R_y}|\partial_y \psi |^2\dx x\dx y+\int_{\R_x^+} \mathfrak{q}_{x}(\psi_x) \dx x.\]
Since $\psi\in\Dom(\mathfrak{Q}_{h})=H^1(\R^2)$, one has $\Pi\psi\in H^1(\R^2)$ and $\Pi^\perp\psi=\psi-\Pi\psi\in H^1(\R^2)$. Moreover, for any fixed $x\geq 0$, recall that $u_{x}$ is an eigenfunction corresponding to an eigenvalue of $\mathfrak D_x$, thus one has
\[ \forall x\geq 0, \quad  \mathfrak{q}_{x}(\psi_x) \ = \ \mathfrak{q}_{x}((\Pi\psi)_{x})+\mathfrak{q}_{x}((\Pi^\perp\psi)_{x}) \ \geq \ \mu_1(x) \Vert \Pi\psi \Vert_{L^2(\R_y)}^2+ \mu_2(x) \Vert \Pi^\perp \psi \Vert_{L^2(\R_y)}^2,\]
where we have applied the min-max principle to the quadratic form $\mathfrak{q}_{x}$ and to the functions $(\Pi\psi)_{x}$ and $(\Pi^\perp\psi)_{x}$ which are orthogonal in $L^2(\R_{y})$.

Now, one has $\langle \Pi\varphi,\Pi^\perp\varphi\rangle_{L^2(\R_y)}=0$, for any $\varphi\in L^2(\R^2)$, therefore
\begin{align*} \Vert\partial_x \psi \Vert_{L^2(\R_y)}^2 \ &= \ \Vert \Pi \partial_x \psi \Vert_{L^2(\R_y)}^2+\Vert\Pi^\perp \partial_x \psi \Vert_{L^2(\R_y)}^2\\
&= \ \Vert \partial_x(\Pi \psi) - \mathcal R(x,y)\Vert_{L^2(\R_y)}^2+\Vert \partial_x(\Pi^\perp \psi) +\mathcal R(x,y) \Vert_{L^2(\R_y)}^2,
\end{align*}
with
\[ \mathcal R(x,y) \ := \ \big[\partial_x ,\Pi\big]\psi \ = \ \langle\psi,\partial_x \tilde u_{x}\rangle_{L^2(\R_{y})}\tilde u_{x}(y)+\langle\psi,\tilde u_{x}\rangle_{L^2(\R_{y})}\partial_x \tilde u_{x}(y).\]
It follows, for all $\eps\in(0,1)$,
\begin{multline*} \Vert\partial_x \psi \Vert_{L^2(\R_y)}^2 \ \geq \ (1-\eps)\Vert \partial_x(\Pi \psi) \Vert_{L^2(\R_y)}^2+(1-\eps)\Vert \partial_x(\Pi^\perp \psi) \Vert_{L^2(\R_y)}^2\\
-2(\eps^{-1}-1)\Vert \mathcal R(x,y)\Vert_{L^2(\R_y)}^2.
\end{multline*}

Now, notice, for any $x>0$,
\[ \Vert \mathcal R(x,y)\Vert_{L^2(\R_y)}^2 \ = \ \langle\psi,\partial_x \tilde u_{x}\rangle_{L^2(\R_{y})}^2+\langle\psi,\tilde u_{x}\rangle_{L^2(\R_{y})}^2 \Vert \partial_x \tilde u_{x}\Vert_{L^2(\R_y)}^2 \ \leq \ 2 \  R(x)\ \Vert \psi_x \Vert_{L^2(\R_y)}^2,\]
where we used Proposition~\ref{delta1D}; and for any $x<0$, $\Vert \mathcal R(x,y)\Vert_{L^2(\R_y)}^2 \equiv 0$.

Altogether, we proved
\begin{multline*} \mathfrak{Q}_{h}(\psi)\geq  \int_{\R_x}(1-\eps)h^2\left(\Vert\partial_{x}(\Pi\psi)\Vert_{L^2(\R_y)}^2+\Vert\partial_{x}(\Pi^\perp\psi)\Vert_{L^2(\R_y)}^2\right)\dx x\\
+\int_{x\geq 0} -4\eps^{-1} h^2 R(x)\Vert\psi_x\Vert_{L^2(\R_y)}^2+\mu_1(x) \Vert \Pi\psi\Vert_{L^2(\R_{y})}^2+\mu_2(x) \Vert \Pi^\perp \psi\Vert_{L^2(\R_{y})}^2 \dx x ,
\end{multline*}
and the proof of Proposition~\ref{dimensional-reduction-lb} is complete since  $\langle \tilde u_{x},\partial_x \tilde u_{x}\rangle_{L^2(\R_{y})}=0$ yields
\[\Vert\partial_{x}(\Pi\psi)\Vert_{L^2(\R_y)}^2=|f'(x)|^2+|f(x)|^2\Vert \partial_x \tilde u_{x}(y)\Vert_{L^2(\R_y)}^2\geq |f'(x)|^2.\]
\end{proof}

\subsection{Reduction to model operators}
The aim of this section is to prove the following proposition.
\begin{proposition}\label{comp}
For all $f\in H^1(\R)$, we let
$$\mathfrak{Q}_{h}^{\mode1}(f)=\int_{\R} h^2|f'(x)|^2+\hat\mu_1(x)|f(x)|^2 \dx x,$$
$$\mathfrak{Q}_{h}^{\mode2}(f)=\int_{\R} h^2|f'(x)|^2+\tilde\mu_1(x)|f(x)|^2 \dx x,$$
and we denote by $\mathfrak{H}^{\mode j}_{h}$ the corresponding Friedrichs extensions. Set $M'>M$, where we denote 
\[ M=\sup_{x>0} R(x) = \sup_{x>0}\Vert\partial_{x}u_{x}\Vert^2_{L^2(\R_{y})},\]
bounded by Proposition~\ref{delta1D}. Then there exists $M_0,h_{0}>0$ such that for all $h\in(0,h_{0})$ and all $C_h\geq M_0 h$:
$$ \mathcal{N}\left(\mathfrak{H}^{\mode1}_{h}, -\frac{1}{4}- C_h-h^2 M\right)\leq\mathcal{N}\left(\mathfrak{H}_{h}, -\frac{1}{4}-C_h \right)\leq \mathcal{N}\left(\mathfrak{H}^{\mode2}_{h}, \frac{-\frac{1}{4}-C_h}{1-h}+(4 M'+1)h \right)$$
and
$$(1-h)\big\{ \lambda^{\mode2}_{n}(h)- (4M'+1) h\}\leq\lambda_{n}(h)\leq \lambda^{\mode1}_{n}(h)+h^2 M.$$
\end{proposition}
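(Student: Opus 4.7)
The plan is to combine a tensor-product test-function argument, yielding the upper bounds, with the variational lower bound of Proposition~\ref{dimensional-reduction-lb} specialised to $\eps=h$, followed by a min-max comparison of $\mathfrak{H}_{h}$ with a direct-sum model operator associated with the decomposition $\Pi L^{2}(\R^{2})\oplus\Pi^{\perp}L^{2}(\R^{2})$.

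For the upper estimate $\lambda_n(h)\leq \lambda_n^{\mode1}(h)+h^2M$, I would take an $L^{2}(\R)$-orthonormal basis $f_1,\dots,f_n$ of the spectral subspace of $\mathfrak{H}^{\mode1}_{h}$ corresponding to its first $n$ eigenvalues, and consider the $n$-dimensional subspace $\spann\{\psi_i\}\subset H^1(\R^2)$ with $\psi_i(x,y)=f_i(x)\tilde u_x(y)$. Points~\ref{6} and~\ref{7} of Proposition~\ref{delta1D} ensure $\psi_i\in H^1(\R^2)$; since $\Pi\psi_i=\psi_i$, Lemma~\ref{op-red} yields for any $\psi=f\tilde u_x$ in this span
\[ \mathfrak{Q}_{h}(\psi)=\mathfrak{Q}_{h}^{\mode1}(f)+h^2\int_{\R}\tilde R(x)|f(x)|^2\dx x\leq \mathfrak{Q}_{h}^{\mode1}(f)+h^2 M\Vert f\Vert^2_{L^2(\R)},\]
together with $\Vert\psi\Vert^2=\Vert f\Vert^2_{L^2(\R)}$. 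The min-max principle then gives $\lambda_n(h)\leq \lambda_n^{\mode1}(h)+h^2M$, and the counting inequality $\mathcal{N}(\mathfrak{H}^{\mode1}_{h},-\frac{1}{4}-C_h-h^2 M)\leq \mathcal{N}(\mathfrak{H}_{h},-\frac{1}{4}-C_h)$ follows immediately, since eigenvalues below $-\frac{1}{4}$ are discrete for both operators.

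For the opposite direction I would apply Proposition~\ref{dimensional-reduction-lb} with $\eps=h$. Setting $f(x)=\langle\psi,\tilde u_x\rangle_{L^2(\R_y)}$, the first integral on its right-hand side can be rewritten as
\[ (1-h)\,\mathfrak{Q}_{h}^{\mode2}(f)+h\int_{\R}\bigl[\tilde\mu_1(x)-4\tilde R(x)\bigr]|f(x)|^2\dx x,\]
and using $\tilde\mu_1\geq -1$ together with $\tilde R\leq M$ (points~\ref{3} and~\ref{6} of Proposition~\ref{delta1D}), the second summand is $\geq -(4M+1)h\Vert f\Vert^2$. Since $M'>M$, one may choose $h_0>0$ small enough that $4M+1\leq (1-h)(4M'+1)$ for all $h\in(0,h_0)$; the first integral is then bounded below by $(1-h)\bigl[\mathfrak{Q}_{h}^{\mode2}(f)-(4M'+1)h\Vert f\Vert^2\bigr]$. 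Dropping the nonnegative kinetic part of the second integral and using $\tilde\mu_2\geq -\frac{1}{4}$ with $\tilde R\leq M$, that integral is $\geq -(\frac{1}{4}+4hM)\Vert\Pi^\perp\psi\Vert^2$.

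Combining these inequalities produces $\mathfrak{Q}_{h}(\psi)\geq Q_{\tilde A}(\Pi\psi)+Q_{\tilde B}(\Pi^\perp\psi)$, where, via the isometry $\Pi L^2(\R^2)\simeq L^2(\R_x)$ sending $f\tilde u_x$ to $f$, the operator $\tilde A$ is unitarily equivalent to $(1-h)\bigl[\mathfrak{H}^{\mode2}_{h}-(4M'+1)h\bigr]$, while $Q_{\tilde B}(\phi)\geq -(\frac{1}{4}+4hM)\Vert\phi\Vert^2$. An elementary computation shows that for $h$ small, $\inf\sigma_{\ess}(\tilde A)=(1-h)(-\frac{1}{4}-(4M'+1)h)<-(\frac{1}{4}+4hM)$, so the $\tilde B$ sector of $\tilde A\oplus\tilde B$ produces no eigenvalue below $\inf\sigma_{\ess}(\tilde A\oplus\tilde B)=\inf\sigma_{\ess}(\tilde A)$. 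The min-max principle then yields $\lambda_n(h)\geq \lambda_n(\tilde A)=(1-h)\bigl[\lambda_n^{\mode2}(h)-(4M'+1)h\bigr]$, and inverting this inequality gives the counting bound $\mathcal{N}(\mathfrak{H}_{h},-\frac{1}{4}-C_h)\leq \mathcal{N}(\mathfrak{H}^{\mode2}_{h},\frac{-\frac{1}{4}-C_h}{1-h}+(4M'+1)h)$. The main delicate point is the constant bookkeeping: the strict inequality $M'>M$ supplies the margin needed to absorb $(4M+1)h$ into $(1-h)(4M'+1)h$, and the requirement $C_h\geq M_0 h$, say with $M_0\geq 4M$, ensures that the spectral window of interest lies strictly below the spurious constant $-(\frac{1}{4}+4hM)$ contributed by the $\tilde B$ sector.
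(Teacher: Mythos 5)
Your proof is correct and follows essentially the same route as the paper: tensor test functions $f(x)\tilde u_x(y)$ combined with Lemma~\ref{op-red} for the upper bound, and Proposition~\ref{dimensional-reduction-lb} with $\eps=h$ followed by a direct-sum/min-max comparison for the lower bound. The only difference is cosmetic: the paper routes both inequalities through intermediate auxiliary operators ($\mathfrak{H}^{\red}_h$, $\mathfrak{H}^{\tens}_h$, $\mathfrak{H}^{\app}_h$, stated as Propositions~\ref{red-ub}, \ref{P.QvsQtens}, \ref{red-lb}), whereas you compress the chain by taking eigenfunctions of $\mathfrak{H}^{\mode1}_h$ directly as test functions and by absorbing the $(1-h)$-factoring and the $\tilde\mu_1\geq-1$, $\tilde R\leq M$, $M'>M$ bookkeeping inline; the underlying ideas, decomposition and constants are identical.
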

\begin{remark}
$M_0$ must be such that $M_0>4M$ and $\frac{-\frac{1}{4}-C_h}{1-h}+(4 M'+1)h<\frac{-1}{4}$, therefore one can chose $M_0=4M'+\frac34$.
\end{remark}
Let us now deal with the proof of Proposition~\ref{comp}. Lemma~\ref{op-red} suggests we introduce the following reduced operator.
\begin{notation}
For all $f\in H^1(\R)$, we let
$$\mathfrak{Q}^{\red}_{h}(f)=\int_{\R_{x}} h^2|f'(x)|^2+(\hat\mu_{1}(x)+h^2\tilde R(x))|f(x)|^2  \dx x$$
and we denote by $\mathfrak{H}^\red_{h}$ the corresponding Friedrichs extension. We define $(\lambda^\red_{n}(h),f^\red_{n})$ the $n$-th $L^2$-normalized eigenpair which exists at least for $n\in\{1,\cdots, \mathcal{N}(\mathfrak{H}^\red_{h},-\frac{1}{4})\}$.
\end{notation}
\begin{proposition}\label{red-ub}
For all $n\in\{1,\cdots,\mathcal{N}(\mathfrak{H}^\red_{h},E)\}$, with $E< -\frac{1}{4}$, and all $h>0$ the $n$-th eigenvalue of $\mathfrak{H}_{h}$ exists and satisfies:
$$\lambda_{n}(h)\leq \lambda^\red_{n}(h).$$
In particular, we have
$$\mathcal{N}(\mathfrak{H}_{h},E)\geq\mathcal{N}(\mathfrak{H}^\red_{h},E).$$
\end{proposition}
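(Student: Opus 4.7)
The plan is to apply the min-max characterization to $\mathfrak{H}_h$ with an $n$-dimensional test space built by ``lifting'' the first $n$ eigenfunctions of the reduced operator $\mathfrak{H}_h^{\red}$. Denote these eigenfunctions by $f_1^\red, \dots, f_n^\red$; they lie in $\Dom(\mathfrak{Q}_h^\red) \subset H^1(\R)$. The natural candidate trial functions are
\[ \psi_j(x,y) \ := \ f_j^\red(x)\, \tilde u_x(y), \qquad j=1,\dots,n, \]
and I will take $G_n := \spann(\psi_1,\dots,\psi_n)$ as the test space. Note that $\Pi\psi_j = \psi_j$ by construction (since $\|\tilde u_x\|_{L^2(\R_y)}=1$), and injectivity of the map $f\mapsto f(x)\tilde u_x(y)$ ensures $\dim G_n = n$.

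The first step is to check that each $\psi_j$ actually belongs to $\Dom(\mathfrak{Q}_h) = H^1(\R^2)$. Computing formally,
\[ \partial_y \psi_j = f_j^\red(x)\,\partial_y \tilde u_x(y), \qquad \partial_x \psi_j = (f_j^\red)'(x)\,\tilde u_x(y) + f_j^\red(x)\,\partial_x \tilde u_x(y), \]
and the $L^2(\R^2)$-integrability of both is obtained via Fubini from the uniform bounds $\esssup_{x\in\R} \|\partial_y \tilde u_x\|_{L^2(\R_y)}^2 < \infty$ and $\esssup_{x\in\R}\|\partial_x \tilde u_x\|_{L^2(\R_y)}^2 < \infty$ provided by items~\ref{6} and~\ref{7} of Proposition~\ref{delta1D} (keeping in mind that $\partial_x \tilde u_x \equiv 0$ for $x<0$). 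This is really the only point that requires care; everything else is essentially bookkeeping.

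Next I apply Lemma~\ref{op-red} and Lemma~\ref{op-red-bil} directly to any $\psi = \sum_j c_j \psi_j \in G_n$. Since $\Pi\psi = \psi$ and $\psi$ has the form $f(x)\tilde u_x(y)$ with $f := \sum_j c_j f_j^\red \in H^1(\R)$, these lemmas yield
\[ \mathfrak{Q}_h(\psi) \ = \ \mathfrak{Q}_h^\red(f) \qquad \text{and} \qquad \|\psi\|^2 \ = \ \|f\|_{L^2(\R)}^2. \]
Thus the Rayleigh quotient of $\mathfrak{H}_h$ on $G_n$ coincides with the Rayleigh quotient of $\mathfrak{H}_h^\red$ on $\spann(f_1^\red,\dots,f_n^\red)$.

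Finally I invoke the min-max principle from Notation~\ref{lan}:
\[ \lambda_n(h) \ \leq \ \sup_{\psi\in G_n\setminus\{0\}} \frac{\mathfrak{Q}_h(\psi)}{\|\psi\|^2} \ = \ \sup_{f\in \spann(f_j^\red)\setminus\{0\}} \frac{\mathfrak{Q}_h^\red(f)}{\|f\|_{L^2(\R)}^2} \ = \ \lambda_n^\red(h) \ \leq \ E \ < \ -\tfrac{1}{4}. \]
Since $-\tfrac14 < -\tfrac{1}{4(1+h^2)} = \inf \sigma_\ess(\mathfrak{H}_h)$, this $n$-th min-max value actually coincides with the $n$-th discrete eigenvalue of $\mathfrak{H}_h$, so $\lambda_n(h)$ exists and satisfies the desired inequality; the counting-function statement $\mathcal N(\mathfrak{H}_h,E) \geq \mathcal N(\mathfrak{H}_h^\red,E)$ follows at once by taking $n = \mathcal N(\mathfrak{H}_h^\red,E)$.
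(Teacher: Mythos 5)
Your proof is correct and follows essentially the same route as the paper: the same trial space $\spann\{f_j^\red(x)\tilde u_x(y)\}$, the same reduction of $\mathfrak{Q}_h$ on this space to $\mathfrak{Q}_h^\red$ via Lemmas~\ref{op-red} and~\ref{op-red-bil}, and the same min-max conclusion combined with the observation that $-\tfrac14 < -\tfrac{1}{4(1+h^2)}=\inf\sigma_{\ess}(\mathfrak{H}_h)$. Your explicit verification that each $\psi_j$ lies in $H^1(\R^2)$ is already part of Lemma~\ref{op-red}, so the paper simply invokes that lemma rather than redoing it.
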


\begin{proof}
The proof relies on the introduction of suitable test functions. For any $n\in\{1,\cdots,\mathcal{N}(\mathfrak{H}^\red_{h},E)\}$, let us introduce the $n$-dimensional span
$$F_n=\underset{j\in\{1,\cdots, n\}}{\spann} f_{j}^\red(x)\tilde u_{x}(y).$$
For all $\psi\in F_n$ we have, with Lemma~\ref{op-red-bil} and noticing that the $f_{j}^\red$ are orthogonal for the bilinear form associated with $\mathfrak{Q}^{\red}_{h}$,
$$\mathfrak{Q}_{h}(\psi)\leq \lambda^\red_{n}(h)\Vert\psi\Vert^2.$$
The conclusion follows from the min-max principle and the fact that $-\frac{1}{4(1+h^2)}>-\frac{1}{4}$.
\end{proof}
We shall now analyze the reverse inequality. This is the aim of the following proposition.
\begin{proposition}\label{P.QvsQtens}
Let us consider the following quadratic form, defined on the product $H^1(\R)\times H^1(\R^2)$, by
\begin{multline*}
\mathfrak{Q}_{h}^\tens(f,\varphi)=\\
\int_{\R_{x}} (1-h)h^2|f'(x)|^2+\big(\tilde\mu_1(x)-4M h\big)|f(x)|^2 \dx x+\int_{\R^2} (1-h)h^2|\partial_{x}\varphi|^2+\big(\tilde\mu_2(x)-4M h\big)|\varphi|^2 \dx x\dx y,\\
\quad \forall (f,\varphi)\in H^1(\R)\times H^1(\R^2).
\end{multline*}
If $\mathfrak{H}^\tens_{h}$ denotes the associated operator, then we have, for all $n\geq 1$
$$\lambda_{n}(h)\geq \lambda^\tens_{n}(h).$$
\end{proposition}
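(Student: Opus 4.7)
The plan is to derive the inequality directly from Proposition~\ref{dimensional-reduction-lb} by specializing the parameter $\eps$ and then applying the min-max principle through the natural isometric embedding of $H^1(\R^2)$ into $H^1(\R)\times H^1(\R^2)$ induced by the decomposition $\psi=\Pi\psi+\Pi^\perp\psi$.

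First, I would apply Proposition~\ref{dimensional-reduction-lb} with the choice $\eps=h$. Since $\tilde R(x)\leq M$ by the definition of $M$ (and using point~\ref{6} of Proposition~\ref{delta1D}), the error term $4\eps^{-1}h^2\tilde R(x)$ is uniformly bounded by $4Mh$. This yields, for every $\psi\in H^1(\R^2)$,
\[
\mathfrak{Q}_h(\psi)\ \geq\ \mathfrak{Q}_h^{\tens}\bigl(f,\Pi^\perp\psi\bigr),\qquad \text{where }f(x)=\langle\psi,\tilde u_x\rangle_{L^2(\R_y)}.
\]
This is essentially a rewriting: the first line of the lower bound in Proposition~\ref{dimensional-reduction-lb} matches the $f$-part of $\mathfrak{Q}_h^{\tens}$, while the second line matches the $\varphi$-part with $\varphi=\Pi^\perp\psi$. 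I would need to briefly check that both $f\in H^1(\R)$ and $\Pi^\perp\psi\in H^1(\R^2)$, so that the pair $(f,\Pi^\perp\psi)$ lies in the form domain of $\mathfrak{H}_h^{\tens}$; this follows from Lemma~\ref{op-red} (and the uniform bounds on $\|\partial_x\tilde u_x\|_{L^2(\R_y)}$ and $\|\partial_y\tilde u_x\|_{L^2(\R_y)}$ provided by points~\ref{6} and~\ref{7} of Proposition~\ref{delta1D}).

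Next, I would introduce the linear map
\[
\iota:H^1(\R^2)\longrightarrow H^1(\R)\times H^1(\R^2),\qquad \iota(\psi)=\bigl(f,\Pi^\perp\psi\bigr).
\]
Because $\tilde u_x$ is $L^2(\R_y)$-normalized, the Pythagorean identity gives $\|\psi\|^2=\|f\|_{L^2(\R)}^2+\|\Pi^\perp\psi\|_{L^2(\R^2)}^2$, so $\iota$ is an $L^2$-isometry. In particular it is injective, hence for any $n$-dimensional subspace $G\subset H^1(\R^2)$ the image $\iota(G)$ is an $n$-dimensional subspace of the form domain of $\mathfrak{H}_h^{\tens}$.

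Finally, I would combine these two ingredients with the min-max characterization of Notation~\ref{lan}. For any admissible $G$,
\[
\sup_{\psi\in G}\frac{\mathfrak{Q}_h(\psi)}{\|\psi\|^2}\ \geq\ \sup_{\psi\in G}\frac{\mathfrak{Q}_h^{\tens}(f,\Pi^\perp\psi)}{\|f\|_{L^2(\R)}^2+\|\Pi^\perp\psi\|_{L^2(\R^2)}^2}\ =\ \sup_{(g,\varphi)\in\iota(G)}\frac{\mathfrak{Q}_h^{\tens}(g,\varphi)}{\|g\|_{L^2(\R)}^2+\|\varphi\|_{L^2(\R^2)}^2}.
\]
Taking the infimum over all $n$-dimensional subspaces $G\subset H^1(\R^2)$ and noting that the infimum over the family $\{\iota(G)\}$ dominates the infimum over all $n$-dimensional subspaces of $H^1(\R)\times H^1(\R^2)$, we obtain $\lambda_n(h)\geq\lambda_n^{\tens}(h)$.

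The only non-routine point, and the place where care is needed, is the passage from a pointwise lower bound on the quadratic form to a statement about eigenvalues: one must verify that $\iota$ genuinely delivers $n$-dimensional test subspaces and that the ratio on the right does involve the correct norm $\|f\|^2+\|\varphi\|^2$ rather than $\|\psi\|^2$ alone. The isometry property above is precisely what makes the min-max argument go through cleanly, and it is really the main conceptual step.
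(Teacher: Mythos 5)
Your argument reproduces the paper's proof almost step for step: apply Proposition~\ref{dimensional-reduction-lb} with $\eps=h$, bound $\tilde R(x)$ by $M$, form the linear map $\psi\mapsto(\langle\psi,\tilde u_x\rangle_{L^2(\R_y)},\Pi^\perp\psi)$ (the paper's $\mathcal{J}$), use the orthogonal norm splitting to rewrite the Rayleigh quotient, and conclude via min-max by comparing the infimum over the image subspaces with the infimum over all $n$-dimensional subspaces of the product space. The only difference is a slightly more explicit check that the map is an $L^2$-isometry, which the paper records simply as the norm identity; both are correct.
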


\begin{proof}
We use Proposition~\ref{dimensional-reduction-lb} with $\eps=h$ and we get, for all $\psi\in\Dom(\mathfrak{Q}_{h})$, 
\begin{multline*}
\mathfrak{Q}_{h}(\psi)\geq \int_{\R_{x}} (1-h)h^2|f'|^2+\big(\tilde\mu_1(x)-4M h\big)|f|^2 \dx x\\
+\int_{\R^2} (1-h)h^2|\partial_{x}\Pi^\perp\psi|^2+\big(\tilde\mu_2(x)-4M h\big)|\Pi^\perp\psi|^2 \dx x\dx y.
\end{multline*}
Thus we have
\begin{equation}\label{tensorisation}
\mathfrak{Q}_{h}(\psi)\geq \mathfrak{Q}^\tens_{h}(\langle\psi,\tilde u_{x}\rangle_{L^2(\R_{y})},\Pi^\perp\psi),\quad \Vert\psi\Vert^2=\Vert f\Vert_{L^2(\R)}^2+\Vert\Pi^\perp\psi\Vert^2.
\end{equation}
With Notation \ref{lan} and \eqref{tensorisation} we infer
\[
   \lambda_n(h)\geq 
   \inf_{\substack{G\subset H^1(\R^2)\\ \dim G=n}} \ \sup_{\substack{
    \psi\in G}}   \frac{\mathfrak{Q}^\tens_{h}(\langle\psi,\tilde u_{x}\rangle_{L^2(\R_{y})},\Pi^\perp\psi)}{\Vert\Pi\psi\Vert^2+\Vert\Pi^\perp\psi\Vert^2} \,.
\]
Now, we define the linear injection
$$
\mathcal{J} : \left\{\begin{array}{ccc}
H^1(\R^2) &\to&  H^1(\R) \times H^1(\R^2) \\
\psi&\mapsto&(\langle\psi,\tilde u_{x}\rangle_{L^2(\R_{y})}\ ,\,\Pi^\perp\psi)
\end{array}\right..
$$
so that we have
$$ \inf_{\substack{G\subset H^1(\R^2)\\ \dim G=n}} \ \sup_{\substack{
    \psi\in G}}   \frac{\mathfrak{Q}^\tens_{h}(\langle\psi,\tilde u_{x}\rangle_{L^2(\R_{y})},\Pi^\perp\psi)}{\Vert\Pi\psi\Vert^2+\Vert\Pi^\perp\psi\Vert^2}=\inf_{\substack{\tilde G\subset \mathcal{J}(H^1(\R^2))\\ \dim \tilde G=n}} \ \sup_{\substack{(f,\varphi)\in \tilde G}}   \frac{\mathfrak{Q}^\tens_{h}(f,\varphi)}{\Vert f\Vert_{L^2(\R)}^2+\Vert\varphi\Vert^2} \,$$
and
$$\inf_{\substack{\tilde G\subset \mathcal{J}(H^1(\R^2))\\ \dim \tilde G=n}} \ \sup_{\substack{(f,\varphi)\in \tilde G}}   \frac{\mathfrak{Q}^\tens_{h}(f,\varphi)}{\Vert f\Vert_{L^2(\R)}^2+\Vert\varphi\Vert^2}\geq \inf_{\substack{\tilde G\subset H^1(\R)\times  H^1(\R^2) \\ \dim \tilde G=n}} \ \sup_{\substack{(f,\varphi)\in \tilde G}}   \frac{\mathfrak{Q}^\tens_{h}(f,\varphi)}{\Vert f\Vert_{L^2(\R)}^2+\Vert\varphi\Vert^2}.$$
We recognize the $n$-th Rayleigh quotient of $\mathfrak{H}^\tens_{h}$ and the conclusion follows.
\end{proof}

\begin{notation}
For all $f\in H^1(\R)$, we let
$$\mathfrak{Q}_{h}^\app(f)=\int_{\R} (1-h)h^2|f'(x)|^2+\big(\tilde\mu_1(x)-4M h\big)|f(x)|^2 \dx x$$
and we denote by $\mathfrak{H}^\app_{h}$ the corresponding Friedrichs extension.
\end{notation}

\begin{proposition}\label{red-lb}
For any $h>0$ and $C_h> 4Mh$, one has
$$\lambda_{n}(h)\geq \lambda^\app_{n}(h),\quad \forall n\in\left\{1,\cdots, \mathcal{N}\left(\mathfrak{H}_{h}, -\frac{1}{4}-C_h\right) \right\}$$
and
$$\mathcal{N}\left(\mathfrak{H}_{h}, -\frac{1}{4}-C_h\right)\leq \mathcal{N}\left(\mathfrak{H}^\app_{h}, -\frac{1}{4}-C_h\right).$$
\end{proposition}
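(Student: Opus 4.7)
The plan is to leverage Proposition~\ref{P.QvsQtens} and exploit the fact that $\mathfrak{Q}^\tens_h$ is a sum of two independent quadratic forms acting on separated function spaces $H^1(\R)$ and $H^1(\R^2)$. Accordingly, $\mathfrak{H}^\tens_h$ is orthogonally decomposed as $\mathfrak{H}^\app_h \oplus \mathfrak{B}_h$ on $L^2(\R)\oplus L^2(\R^2)$, where $\mathfrak{B}_h$ is the Friedrichs extension of the form $\int_{\R^2}(1-h)h^2|\partial_x\varphi|^2+(\tilde\mu_2(x)-4Mh)|\varphi|^2\,\dx x\dx y$. In particular, the spectrum of $\mathfrak{H}^\tens_h$ is the union of the spectra of $\mathfrak{H}^\app_h$ and $\mathfrak{B}_h$, counted with multiplicities.

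Next, I would note that by point~\ref{3} of Proposition~\ref{delta1D} together with the convention $\tilde\mu_2(x)=0$ for $x<0$ and $\mu_2(x)=0$ for $x\in[0,1]$, one has $\tilde\mu_2(x)\geq -\tfrac14$ for all $x\in\R$. Consequently $\mathfrak{B}_h\geq -\tfrac14-4Mh$ as a quadratic form, so $\sigma(\mathfrak{B}_h)\subset[-\tfrac14-4Mh,+\infty)$. Under the hypothesis $C_h>4Mh$ this gives the crucial gap $-\tfrac14-C_h<-\tfrac14-4Mh\leq\inf\sigma(\mathfrak{B}_h)$, so the only spectrum of $\mathfrak{H}^\tens_h$ lying at or below $-\tfrac14-C_h$ is spectrum of $\mathfrak{H}^\app_h$.

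Now fix $n\in\{1,\dots,\mathcal{N}(\mathfrak{H}_h,-\tfrac14-C_h)\}$, so that $\lambda_n(h)\leq-\tfrac14-C_h$. Proposition~\ref{P.QvsQtens} yields $\lambda_n^\tens(h)\leq\lambda_n(h)\leq -\tfrac14-C_h$. By the previous paragraph, the first $n$ eigenvalues of $\mathfrak{H}^\tens_h$, being all below the spectrum of $\mathfrak{B}_h$, are precisely the first $n$ eigenvalues of $\mathfrak{H}^\app_h$; hence $\lambda_n^\app(h)=\lambda_n^\tens(h)\leq\lambda_n(h)$, which proves the eigenvalue inequality. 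The counting-function bound then follows: taking $N=\mathcal{N}(\mathfrak{H}_h,-\tfrac14-C_h)$ (assumed finite, else apply the argument to any finite truncation), the above shows $\lambda_N^\app(h)\leq-\tfrac14-C_h$, so $\mathcal{N}(\mathfrak{H}^\app_h,-\tfrac14-C_h)\geq N$.

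The essentially unique technical step is the lower bound $\tilde\mu_2\geq -\tfrac14$; everything else is the min-max principle applied to a direct sum. There is no serious obstacle, and the condition $C_h>4Mh$ is precisely what is needed to separate the spurious spectrum of $\mathfrak{B}_h$ from the genuine low-lying eigenvalues.
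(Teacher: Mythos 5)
Your argument is correct and follows essentially the same route as the paper: you use the orthogonal decomposition $\mathfrak{H}^\tens_h=\mathfrak{H}^\app_h\oplus\mathfrak{B}_h$, the lower bound $\tilde\mu_2\geq -\tfrac14$ (Proposition~\ref{delta1D}, point~\ref{3}, plus the conventions for $x\leq 1$ and $x<0$) to get $\inf\sigma(\mathfrak{B}_h)\geq-\tfrac14-4Mh$, and then conclude via Proposition~\ref{P.QvsQtens}. The only slight addition is your explicit remark on the finiteness of $\mathcal{N}(\mathfrak{H}_h,-\tfrac14-C_h)$, which the paper leaves implicit.
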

\begin{proof}
Notice that for any $\varphi\in H^1(\R^2)$, one has
\[ \int_{\R^2} (1-h)h^2|\partial_{x}\varphi|^2+\big(\tilde\mu_2(x)-4M h\big)|\varphi|^2 \dx x\dx y>(-\frac14-4Mh )\Vert \varphi\Vert^2.\]
It follows that for any eigenstate of $\mathfrak H_h^\tens$ below the threshold $(-\frac14-4Mh )$ is of the form $(f,0)$, with $f$ an eigenstate of $\mathfrak{H}^\app_{h}$. In other words, one has for any $C\geq 4M$,
$$\left\{\lambda\in\sigma\left(\mathfrak{H}^\tens_{h}\right) : \lambda\leq -\frac{1}{4}-Ch\right\}=\left\{\lambda\in\sigma_{\dis}\left(\mathfrak{H}^\app_{h}\right) : \lambda\leq -\frac{1}{4}-Ch\right\},$$
and the result now follows from Proposition~\ref{P.QvsQtens}.
\end{proof}
Proposition~\ref{comp} is a direct consequence of Propositions~\ref{red-ub} and~\ref{red-lb}, and straightforward computations. In particular, we use 
\begin{align*}
\mathfrak{Q}_{h}^\app(f)&=(1-h)\int_{\R} h^2|f'(x)|^2+\frac{\tilde\mu_1(x)-4M h}{1-h}|f(x)|^2 \dx x\\
&\geq (1-h)\int_{\R} h^2|f'(x)|^2+\left(\tilde\mu_1(x)(1+h)-4M h- C h^2 \right)|f(x)|^2 \dx x\\
&\geq (1-h)\int_{\R} h^2|f'(x)|^2+\left(\tilde\mu_1(x)-(4M' +1)h \right)|f(x)|^2\dx x,
\end{align*}
which is valid for $h\in (0,h_0)$ with $h_0$ sufficiently small, $C$ sufficiently large, and any $M'>M$ (the last inequality comes from Proposition~\ref{delta1D}, item $3$). It follows
\[\mathcal{N}\left(\mathfrak{H}^\app_{h}, -\frac{1}{4}-C_h\right)\leq \mathcal{N}\left(\mathfrak{H}^{\mode2}_{h}, \frac{-\frac{1}{4}-C_h}{1-h}+(4 M'+1)h \right)\]
and for any $n\leq \mathcal{N}\left(\mathfrak{H}^{\mode2}_{h}, -\frac{1}{4}-C_h\right)$,
\[ \lambda_n^\app(h) \geq (1-h) \big\{\lambda_n^{\mode2}(h)-(4M'+1)h\big\}.\]
The condition $C_h\geq M_0 h>(4M'+\frac34)h$ ensures $\frac{-\frac{1}{4}-C_h}{1-h}+(4 M'+1)h<-\frac{1}{4}$, thus the above quantities are well-defined.

\section{Models in dimension one}\label{S4}
Thanks to Section~\ref{S3} we have reduced the spectral analysis of $\mathfrak{H}_{h}$ to the investigation of one dimensional models. This section is devoted to the proofs of Theorems~\ref{number} and~\ref{firsteigenvalues}.
\subsection{Number of bound states}
In order to prove Theorem~\ref{number} we need the following extended Weyl's asymptotics which is not completely standard (see Remark~\ref{rem-counting}).
\begin{proposition}\label{number-1D}
Let us consider $V : \R\to \R$ a piecewise Lipschitzian function with a finite number of discontinuities satisfying:
\begin{enumerate}
\item $V$ tends to $\ell_{\pm\infty}$ when $x\to\pm\infty$ with $\ell_{+\infty}\leq\ell_{-\infty}$,
\item  $\sqrt{(\ell_{+\infty}-V)_{+}}$ belongs to $L^1(\R)$.
\end{enumerate}
Consider the operator $\mathfrak{h}_{h}=-h^2\partial_{x}^2+V(x)$ and a function $(0,1)\ni h\mapsto E(h)\in(-\infty,\ell_{+\infty})$ such that one has:
\begin{enumerate}
\item for any $h\in(0,1)$, $\{x\in\R : V(x)\leq E(h)\}=[x_{\min}(E(h)),x_{\max}(E(h))]$,
\item  $h^{1/3}(x_{\max}(E(h))-x_{\min}(E(h)))\underset{h\to 0}{\to} 0$,
\item $E(h)\underset{h\to 0}{\to} E_{0}\leq \ell_{+\infty}$.
\end{enumerate}
Then we have:
$$\mathcal{N}(\mathfrak{h}_{h},E(h))\underset{h\to 0}{\sim} \frac{1}{\pi h}\int_{\R} \sqrt{(E_{0}-V)_{+}}\dx x.$$
\end{proposition}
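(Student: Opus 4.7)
The strategy is a Dirichlet--Neumann bracketing performed at two scales. First, I would localize the problem to the classical region $I_h := [x_{\min}(E(h)), x_{\max}(E(h))]$: by the first hypothesis on $E(h)$, one has $V>E(h)$ on $\R\setminus I_h$, so the Dirichlet (resp.\ Neumann) realization of $-h^2\partial_x^2+V$ on each complementary half-line is bounded below by $V>E(h)$ in the sense of quadratic forms and thus admits no eigenvalue at or below $E(h)$. The usual min-max bracketing then yields
$$ \mathcal{N}(\mathfrak{h}_h^{D,I_h}, E(h)) \ \leq \ \mathcal{N}(\mathfrak{h}_h, E(h)) \ \leq \ \mathcal{N}(\mathfrak{h}_h^{N,I_h}, E(h)),$$
where $\mathfrak{h}_h^{D,I_h}$ and $\mathfrak{h}_h^{N,I_h}$ denote the Dirichlet and Neumann realizations of $\mathfrak{h}_h$ on $I_h$.

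Next, I would partition $I_h$ into $N_h$ subintervals $I_k$ of common length at most $\ell_h$, for a step-size $\ell_h$ to be chosen, and apply a second bracketing, replacing $V$ on each $I_k$ by the constants $V_k^+:=\esssup_{I_k} V$ and $V_k^-:=-\esssup_{I_k}(-V)$. Using the explicit spectrum of the Dirichlet and Neumann realizations of $-h^2\partial_x^2+c$ on an interval of length $\ell_h$, one obtains
$$ \sum_k \Bigl\lfloor \frac{\ell_h}{\pi h}\sqrt{(E(h)-V_k^+)_+}\Bigr\rfloor \ \leq \ \mathcal{N}(\mathfrak{h}_h, E(h)) \ \leq \ \sum_k \Bigl(\Bigl\lfloor\frac{\ell_h}{\pi h}\sqrt{(E(h)-V_k^-)_+}\Bigr\rfloor+1\Bigr).$$

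These two sums are Riemann sums for $\frac{1}{\pi h}\int_{I_h}\sqrt{(E(h)-V)_+}\,\dx x$, and their analysis generates three errors. The rounding from the floor function sums to $O(N_h)=O(|I_h|/\ell_h)$. The replacement of $V$ by a constant on each subinterval produces, via the inequality $|\sqrt{a}-\sqrt{b}|\leq\sqrt{|a-b|}$ combined with the piecewise Lipschitz regularity of $V$, an error of order $|I_h|\sqrt{\ell_h}/h$; the finitely many subintervals straddling a jump discontinuity of $V$ contribute an additional uniformly bounded term that is easily absorbed. Finally, replacing $E(h)$ by $E_0$ and $I_h$ by $\R$ in the limiting integral is handled by the dominated convergence theorem, using the integrability hypothesis~(2) on $V$.

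The main obstacle is to tune $\ell_h$ so that every error is $o(1/h)$, which is the order of the main term. The natural choice $\ell_h=h^{2/3}$ balances the rounding and Lipschitz errors and produces a total error of size $|I_h|/h^{2/3}=h^{1/3}|I_h|\cdot h^{-1}$; the crucial hypothesis $h^{1/3}(x_{\max}(E(h))-x_{\min}(E(h)))\to 0$ is precisely what guarantees this error is $o(1/h)$, hence asymptotically negligible against the main term $\frac{1}{\pi h}\int_\R\sqrt{(E_0-V)_+}\,\dx x$ (which is finite by hypothesis~(2) and diverges like $1/h$). The subtlety requiring the most care is the behavior near the turning points $x_{\min}(E(h))$ and $x_{\max}(E(h))$, where $E(h)-V$ vanishes and the integrand $\sqrt{(E(h)-V)_+}$ has infinite slope; the H\"older-type inequality $|\sqrt{a}-\sqrt{b}|\leq\sqrt{|a-b|}$ invoked above is designed precisely to accommodate this singular behavior.
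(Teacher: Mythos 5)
Your proposal is correct and follows essentially the same route as the paper: Dirichlet--Neumann bracketing on a mesh of size $h^{2/3}$, replacement of $V$ by its extrema on each cell, the square-root H\"older bound $|\sqrt{a_+}-\sqrt{b_+}|\leq\sqrt{|a-b|}$ to tame the Lipschitz error near the turning points, balancing of the rounding error $O(|I_h|/\ell_h)$ against the oscillation error $O(|I_h|\sqrt{\ell_h}/h)$ to find $\ell_h\sim h^{2/3}$, and dominated convergence to pass from $E(h)$ to $E_0$. The only cosmetic deviations are that the paper builds the discontinuities of $V$ into the subdivision rather than absorbing the straddling cells afterwards, and that it performs the localization and the fine subdivision in one step by taking a global partition and restricting the sum to indices between $J_{\min}$ and $J_{\max}$.
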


\begin{proof}
The strategy of the proof is well-known but we recall it since the usual result does not deal with a moving threshold $E(h)$. We consider a subdivision of the real axis $(s_{j}(h^{\alpha}))_{j\in\Z}$, which contains the discontinuities of $V$, and such that there exists $c>0$, $C>0$ for which, for all $j\in\Z$ and $h>0$, $ch^\alpha\leq s_{j+1}(h^\alpha)-s_{j}(h^\alpha)\leq Ch^\alpha$, where $\alpha>0$ is to be determined. We introduce 
$$J_{\min}(h^{\alpha})=\min\{j\in\Z :  s_{j}(h^\alpha)\geq x_{\min}(E(h))\},$$
$$J_{\max}(h^{\alpha})=\max\{j\in\Z :  s_{j}(h^\alpha)\leq x_{\max}(E(h))\}.$$
For $j\in\Z$ we may introduce the Dirichlet (resp. Neumann) realization on $(s_{j}(h^{\alpha}), s_{j+1}(h^\alpha))$ of $-h^2\partial_{x}^2+V(x)$ denoted by $\mathfrak{h}_{h,j}^\Dir$ (resp. $\mathfrak{h}_{h,j}^\Neu$). The so-called Dirichlet-Neumann bracketing (see~\cite[Chapter XIII, Section 15]{ReSi78}) implies:
$$\sum_{j=J_{\min}(h^\alpha)}^{J_{\max}(h^\alpha)} \mathcal{N}(\mathfrak{h}^\Dir_{h,j},E(h)) \leq\mathcal{N}(\mathfrak{h}_{h},E(h))\leq \sum_{j=J_{\min}(h^\alpha)-1}^{J_{\max}(h^\alpha)+1} \mathcal{N}(\mathfrak{h}^\Neu_{h,j},E(h)).$$
Let us estimate $\mathcal{N}(\mathfrak{h}^\Dir_{h,j},E(h))$. If $\mathfrak{q}_{h,j}^\Dir$ denotes the quadratic form of $\mathfrak{h}^\Dir_{h,j}$, we have:
$$\mathfrak{q}_{h,j}^\Dir(\psi)\leq \int_{s_{j}(h^\alpha)}^{s_{j+1}(h^\alpha)} h^2|\psi'(x)|^2+V_{j,\sup,h}|\psi(x)|^2\dx x,\quad\forall\psi\in\mathcal{C}^\infty_{0}((s_{j}(h^{\alpha}), s_{j+1}(h^\alpha))),$$
where 
$$V_{j,\sup,h}=\sup_{x\in(s_{j}(h^{\alpha}), s_{j+1}(h^\alpha))} V(x).$$
We infer that
$$\mathcal{N}(\mathfrak{h}^\Dir_{h,j},E(h))\geq\#\left\{n\geq 1 : n\leq \frac{1}{\pi h}(s_{j+1}(h^{\alpha})-s_{j}(h^\alpha))\sqrt{\left(E(h)-V_{j,\sup,h}\right)_{+}}\right\}$$
so that:
$$\mathcal{N}(\mathfrak{h}^\Dir_{h,j},E(h))\geq  \frac{1}{\pi h}(s_{j+1}(h^{\alpha})-s_{j}(h^\alpha))\sqrt{\left(E(h)-V_{j,\sup,h}\right)_{+}}-1$$
and thus:
\begin{multline*}
\sum_{j=J_{\min}(h^\alpha)}^{J_{\max}(h^\alpha)} \mathcal{N}(\mathfrak{h}^\Dir_{h,j},E(h))\geq \\
\frac{1}{\pi h}\sum_{j=J_{\min}(h^\alpha)}^{J_{\max}(h^\alpha)}(s_{j+1}(h^{\alpha})-s_{j}(h^\alpha))\sqrt{\left(E(h)-V_{j,\sup,h}\right)_{+}} -(J_{\max}(h^\alpha)-J_{\min}(h^\alpha)+1).
\end{multline*}
Let us consider the function 
$$f_{h}(x)=\sqrt{\left(E(h)-V(x)\right)_{+}}$$
and analyze
\begin{multline*}
\left|\sum_{j=J_{\min}(h^\alpha)}^{J_{\max}(h^\alpha)}(s_{j+1}(h^{\alpha})-s_{j}(h^\alpha))\sqrt{\left(E(h)-V_{j,\sup,h}\right)_{+}}       -\int_{\R} f_{h}(x)\dx x\right|\\
\leq \left|\sum_{j=J_{\min}(h^\alpha)}^{J_{\max}(h^\alpha)} \int_{s_{j}(h^\alpha)}^{s_{j+1}(h^\alpha)} \sqrt{\left(E(h)-V_{j,\sup,h}\right)_{+}}-f_{h}(x)\dx x\right|\\
+\int_{s_{J_{\max}}(h^\alpha)}^{x_{\max}(E(h))} f_{h}(x)\dx x+\int_{x_{\min}(E(h))}^{s_{J_{\min}(h^\alpha)}} f_{h}(x)\dx x\\
\leq  \left|\sum_{j=J_{\min}(h^\alpha)}^{J_{\max}(h^\alpha)} \int_{s_{j}(h^\alpha)}^{s_{j+1}(h^\alpha)} \sqrt{\left(E(h)-V_{j,\sup,h}\right)_{+}}-f_{h}(x)\dx x\right|+\tilde C h^\alpha.
\end{multline*}
Using the trivial inequality $|\sqrt{a_{+}}-\sqrt{b_{+}}|\leq \sqrt{|a-b|}$, we notice that 
$$|f_{h}(x)-\sqrt{\left(E(h)-V_{j,\sup,h}\right)_{+}}|\leq \sqrt{|V(x)-V_{j,\sup,h}|}.$$
Since $V$ is Lipschitzian on $(s_{j}(h^{\alpha}), s_{j+1}(h^\alpha))$, we get:
$$\left|\sum_{j=J_{\min}(h^\alpha)}^{J_{\max}(h^\alpha)} \int_{s_{j}(h^\alpha)}^{s_{j+1}(h^\alpha)}\sqrt{\left(E(h)-V_{j,\sup,h}\right)_{+}}-f_{h}(x)\dx x\right|\leq (J_{\max}(h^\alpha)-J_{\min}(h^\alpha)+1) \tilde Ch^\alpha h^{\alpha/2}.$$
This leads to the optimal choice $\alpha=\frac{2}{3}$ and we get the lower bound:
$$\sum_{j=J_{\min}(h^{2/3})}^{J_{\max}(h^{2/3})} \mathcal{N}(\mathfrak{h}^\Dir_{h,j},E(h))\geq \frac{1}{\pi h}\left( \int_{\R} f_{h}(x)\dx x-\tilde Ch (J_{\max}(h^{2/3})-J_{\min}(h^{2/3})+1)-\tilde C h^{2/3}\right).$$
Therefore we infer
$$\mathcal{N}(\mathfrak{h}_{h},E(h))\geq \frac{1}{\pi h}\left(\int_{\R} f_{h}(x)\dx x-\tilde Ch^{1/3}(x_{\max}(E(h))-x_{\min}(E(h))-\tilde C h^{2/3}\right).$$
We notice that: $f_{h}(x)\leq \sqrt{(\ell_{+\infty}-V(x))_{+}}$ so that we can apply the dominate convergence theorem.
We can deal with the Neumann realizations in the same way.
\end{proof}
\begin{remark}\label{rem-counting}
Classical results (see~\cite{ReSi78, Ro87, DiSj99, Z13}) impose a fixed security distance below the edge of the essential spectrum ($E(h)=E_0<l_{+\infty}$) or deal with non-negative potentials, $V$, with compact support. Both these cases are recovered by Proposition~\ref{number-1D}. In our result, the maximal threshold for which one can ensure that the semiclassical behavior of the counting function holds is dictated by the convergence rate of the potential towards its limit at infinity, through the assumption
\[ h^{1/3}(x_{\max}(E(h))-x_{\min}(E(h)))\underset{h\to 0}{\to} 0. \]
More precisely, assume that $l_{-\infty}>l_{+\infty}$ so that $x_{\min}(E(h))\geq x_{\min}(l_{+\infty})$ is uniformly bounded for $E(h)$ in a neighborhood of $l_{+\infty}$. Then
\begin{itemize}
\item If $l_{+\infty}-V(x)\leq C  x^{-\gamma}$ for any $x\geq x_0$ and given $x_0,C>0$ and $\gamma>2$, then one can choose $E(h)=l_{+\infty}-C h^\rho$ and $x_{\max}(E(h))\leq h^{-\rho/\gamma}$, provided $\rho<\gamma/3$.
\item If $l_{+\infty}-V(x)\leq C_1 \exp(-C_2  x)$ for any $x\geq x_0$ and given $x_0,C_1,C_2>0$, then one can choose $E(h)=l_{+\infty}-C_1\exp(C_2 h^{-1/3}\times o(h))$ and the assumption is satisfied.
\end{itemize}
\end{remark}
\paragraph{Proof of Theorem~\ref{number}}
In order to prove Theorem~\ref{number} we apply Proposition~\ref{comp} with $C_{h}=C(h)$.
Then we apply Proposition~\ref{number-1D} to the operators $\mathfrak{H}^{\mode j}_{h}$. Increasing $M_{0}$ if necessary, we have $E(h)\leq -\frac14-C h$ with any $C>0$ and therefore the assumptions of Proposition~\ref{number-1D} are satisfied. Indeed from Proposition~\ref{delta1D}, $\tilde\mu_{1}$ and $\hat\mu_{1}$ converge exponentially to $-\frac{1}{4}$ as $x\to \infty$, and $\tilde\mu_{1},\hat\mu_{1}>-\frac{1}{4}$ for $x<0$.

\subsection{Low lying spectrum}
Let us now deal with the proofs of Theorem~\ref{firsteigenvalues} and~\ref{tensorisation-x=0}.
\subsubsection{Proof of Theorem~\ref{firsteigenvalues}}
The following proposition provides the asymptotics of the lowest eigenvalues of the models $\mathfrak{H}^{\mode j}_{h}$ and is a direct consequence of the analysis of~\cite[Section 3]{DauRay12}.
\begin{proposition}\label{low-lying1D}
For $j=1,2$ and for all $n\geq 1$ we have:
$$\lambda^{\mode j}_{n}(h)=-1+2^{2/3} z_{\Ai}(n)h^{2/3}+O(h).$$
\end{proposition}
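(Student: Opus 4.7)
The plan is to reduce each $\mathfrak{H}_h^{\mode j}$ to a Dirichlet Airy problem on the half-line via a semiclassical analysis near the minimum of the potential. Both potentials $\hat\mu_1$ and $\tilde\mu_1$ coincide with $\mu_1$ on $(0,+\infty)$, and Proposition~\ref{delta1D} yields
\[ \mu_1(x) = -1 + 2x + O(x^2) \text{ as } x\to 0^+, \qquad \mu_1(x)\to -\tfrac{1}{4} \text{ as } x\to +\infty, \]
with the unique minimum $-1$ attained at $x=0$. The natural rescaling is therefore the Airy scale $x = h^{2/3}t$: under it the operator formally behaves, on the right of $0$, like $-1 + h^{2/3}(-\partial_t^2 + 2t)$. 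The Dirichlet realization of $-\partial_t^2 + 2t$ on $(0,+\infty)$ has $n$-th eigenvalue $2^{2/3}z_{\Ai}(n)$, as the further affine rescaling $t = 2^{-1/3}s$ turns it into $2^{2/3}(-\partial_s^2 + s)$.

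For the upper bound, I would construct quasi-modes. Let $v_n$ be the $n$-th $L^2$-normalized Dirichlet eigenfunction of $-\partial_t^2 + 2t$ on $(0,+\infty)$ (which decays faster than any polynomial and satisfies $v_n(0)=0$), and let $\chi$ be a smooth cut-off equal to $1$ on a fixed neighborhood of $0$. Setting
\[ f_{n,h}(x) = h^{-1/3}\chi(x)\,v_n\!\left(\tfrac{x}{h^{2/3}}\right) \text{ for } x \geq 0, \]
extended by zero for $x<0$, one obtains an $H^1(\R)$ function because $v_n(0)=0$. A direct computation after the change of variables $x = h^{2/3}t$, using the Taylor expansion of $\mu_1$ and the super-exponential decay of $v_n$, gives
\[ \mathfrak{Q}_h^{\mode j}(f_{n,h}) = \bigl(-1 + 2^{2/3}z_{\Ai}(n)h^{2/3}\bigr)\,\|f_{n,h}\|^2 + O(h^{4/3}). \]
The min-max principle, together with the asymptotic orthogonality of the $(f_{n,h})_{n}$ inherited from that of the $(v_n)_n$, then yields $\lambda_n^{\mode j}(h) \leq -1 + 2^{2/3}z_{\Ai}(n)h^{2/3} + O(h)$.

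For the lower bound, I would establish an Agmon-type localization of low-lying eigenfunctions. For any eigenpair $(\lambda,f)$ with $\lambda \leq -1 + Ch^{2/3}$, standard weighted energy estimates with a weight built from the Agmon distance associated to $\mu_1+1$ show that (a) the $L^2$-mass of $f$ on $\{x<0\}$ is exponentially small (because the barrier height there is of order one, giving an Agmon scale of order $h$), and (b) on $\{x>0\}$, $f$ is concentrated at the scale $h^{2/3}$ around $0$. On the effective core region $[0,\delta]$ one replaces $\mu_1(x)$ by $-1+2x$; the quadratic remainder $O(x^2)|f|^2$ contributes $O(h^{4/3})$ on Agmon-localized functions. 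The minorizing operator is then the Dirichlet realization of $-h^2\partial_x^2 -1 + 2x$ on $(0,+\infty)$, whose $n$-th eigenvalue is exactly $-1 + 2^{2/3}z_{\Ai}(n)h^{2/3}$, and min-max gives the matching lower bound.

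The main obstacle is to get the announced $O(h)$ remainder, rather than the $O(h^{4/3})$ that one reads off the Taylor remainder of $\mu_1$. The $O(h)$ scale is imposed by the mismatch between the true problem, which permits an exponentially thin leakage of the eigenfunctions into $\{x<0\}$ on a spatial scale $O(h)$, and the idealized Dirichlet condition at $x=0$: the resulting Robin-to-Dirichlet correction generated by the finite left barrier shifts the eigenvalues by $O(h)$. Combining this effective-boundary analysis with the Agmon concentration then produces the overall $O(h)$ error, as in Section~3 of \cite{DauRay12}.
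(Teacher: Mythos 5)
The paper does not supply a proof for this proposition; it simply invokes Section~3 of \cite{DauRay12}. Your reconstruction captures the correct ingredients of that argument -- Airy scaling $x=h^{2/3}t$, trial functions built from Dirichlet Airy eigenfunctions for the upper bound, Agmon localization, and a Robin-to-Dirichlet boundary-layer analysis for the $O(h)$ remainder. The upper-bound computation is correct and in fact yields $O(h^{4/3})$.

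However, the lower-bound paragraph contains a sign error that breaks the argument as written. The Dirichlet realization of $-h^{2}\partial_{x}^{2}-1+2x$ on $(0,+\infty)$ does \emph{not} minorize $\mathfrak{H}^{\mode j}_{h}$: its form domain $H^{1}_{0}((0,+\infty))$ embeds into $H^{1}(\R)$ by extension by zero, so min-max gives $\lambda^{\Dir}_{n}\geq \lambda^{\mode j}_{n}(h)$, i.e.\ Dirichlet furnishes an \emph{upper} bound. Moreover, if that Dirichlet operator really were a lower bound, combining it with your $O(h^{4/3})$ quasi-mode estimate would give a total error $O(h^{4/3})$, contradicting your own (correct) observation that the Robin-to-Dirichlet correction contributes a genuine $O(h)$ shift. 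The correct minorant is a \emph{Robin} form on $(0,+\infty)$. Indeed, writing $V_{-}$ for the constant value of $\hat\mu_{1}$ or $\tilde\mu_{1}$ on $\{x<0\}$, the one-sided trace inequality
\[
h\sqrt{V_{-}}\,|f(0)|^{2}\ \leq\ \int_{-\infty}^{0} h^{2}|f'|^{2}+V_{-}|f|^{2}\,\dx x
\]
(obtained from $|f(0)|^{2}=2\int_{-\infty}^{0}ff'$ and Young's inequality with weight $h/\sqrt{V_{-}}$) shows that $\mathfrak{Q}^{\mode j}_{h}(f)$ is bounded below by the Robin form $\int_{0}^{\infty}h^{2}|g'|^{2}+\mu_{1}(x)|g|^{2}\,\dx x+h\sqrt{V_{-}}\,|g(0)|^{2}$ applied to $g=f|_{(0,+\infty)}$. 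After the Airy rescaling this Robin parameter becomes $h^{-1/3}\sqrt{V_{-}}\to\infty$, and the Robin eigenvalues lie $O(h^{1/3})$ below the Dirichlet ones (in rescaled units), i.e.\ $O(h)$ in the original eigenvalue; this is precisely the $O(h)$ correction you diagnose in your last paragraph. Finally, a smaller inaccuracy: the $L^{2}$-mass of a low-lying eigenfunction on $\{x<0\}$ is $O(h)$, not exponentially small -- the function decays like $e^{-c|x|/h}$ starting from $|f(0)|=O(1)$, so the total mass on that half-line is comparable to the penetration depth $h$.
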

With Proposition~\ref{comp} this implies Theorem~\ref{firsteigenvalues}.
In fact, it is possible to establish some localization properties of the first eigenfunctions of $\mathfrak{H}_{h}$.
\begin{proposition}\label{Agmon}
Let $\lambda\in(-1,0)$ and $\delta\in(0,1)$. For all $h>0$ and all eigenpairs $(\lambda,\psi)$ of $\mathfrak{H}_{h}$, we have
\begin{equation}\label{Agmon1}
\int_{\R_{y}}\int_{-\infty}^0 e^{2(1-\delta)\sqrt{-\lambda} h^{-1}|x|}|\psi|^2\dx x\dx y\leq \frac{1}{(-\lambda)\delta^2}\Vert\psi\Vert^2.
\end{equation}
Moreover, for all $C_{0}>0$, there exist $h_{0}>0$, $C>0$ and $\eps_{0}>0$ such that for all $h\in(0,h_{0})$ and all eigenpairs $(\lambda,\psi)$ such that $\lambda\leq -1+C_{0}h^{2/3}$, we have
\begin{equation}\label{Agmon2}
\int_{\R_{y}}\int_{0}^{+\infty} e^{2\eps_{0} h^{-2/3}|x|}|\psi|^2\dx x\dx y\leq C\Vert\psi\Vert^2.
\end{equation}
\end{proposition}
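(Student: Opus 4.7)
The plan is to apply an IMS (weighted energy) localization with a weight $\phi$ depending only on $x$. Setting $\chi=e^{\phi}$, the eigenvalue equation $\mathfrak{H}_h\psi=\lambda\psi$ combined with a direct integration by parts yields
\[ \mathfrak{Q}_h(\chi\psi) \ = \ \lambda\int_{\R^2}\chi^2|\psi|^2\dx x\dx y + h^2\int_{\R^2}(\phi'(x))^2\chi^2|\psi|^2\dx x\dx y , \]
while Proposition~\ref{lb-rQ} gives $\mathfrak{Q}_h(\chi\psi)\geq\int_{\R^2}\tilde\mu_1(x)\chi^2|\psi|^2\dx x\dx y$. Combining these produces the master inequality (valid for any piecewise Lipschitz $\phi$ after a standard truncation-and-limit argument that ensures $e^{\phi}\psi\in H^1(\R^2)$)
\[ \int_{\R^2}\bigl(\tilde\mu_1(x)-\lambda-h^2(\phi'(x))^2\bigr)\chi^2|\psi|^2\dx x\dx y\ \leq\ 0. \qquad(\star) \]

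For~\eqref{Agmon1}, I would pick $\phi(x)=(1-\delta)\sqrt{-\lambda}\,h^{-1}(-x)_+$, supported on $\{x<0\}$. On $\{x<0\}$ the bracket in $(\star)$ equals $(-\lambda)\bigl(1-(1-\delta)^2\bigr)\geq (-\lambda)\delta^2$ (using $\delta\in(0,1)$), and on $\{x\geq 0\}$ it reduces to $\mu_1(x)-\lambda\geq -1$ with $\chi=1$. Rearranging gives
\[ (-\lambda)\delta^2\int_{\R_y}\int_{x<0}e^{2\phi}|\psi|^2\dx x\dx y\ \leq\ \int_{\R_y}\int_{x\geq 0}(\lambda-\mu_1)_+|\psi|^2\dx x\dx y\ \leq\ \Vert\psi\Vert^2, \]
which is exactly~\eqref{Agmon1}.

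For~\eqref{Agmon2} the natural choice is the Agmon pseudodistance adapted to the effective 1D potential, namely $\phi(x)=\delta\, h^{-1}\int_0^x\sqrt{(\mu_1(s)-\lambda)_+}\dx s$ for $x\geq 0$ and $\phi(x)=0$ for $x\leq 0$, with $\delta\in(0,1)$. From the Taylor expansion $\mu_1(s)+1=2s+O(s^2)$ provided by Proposition~\ref{delta1D}, the classically allowed set $\{\mu_1(x)\leq\lambda\}\cap\{x\geq 0\}$ is an interval $[0,x_\lambda]$ with $x_\lambda\leq C_0 h^{2/3}$ and $|\mu_1(s)-\lambda|\leq C_0 h^{2/3}$ there; also $\phi\equiv 0$ on $[0,x_\lambda]$. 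On the forbidden region $[x_\lambda,\infty)$, the bracket in $(\star)$ reduces to $(1-\delta^2)(\mu_1-\lambda)\geq 0$; moving the bounded contribution from $[0,x_\lambda]$ to the right-hand side gives
\[ (1-\delta^2)\int_{\R_y}\int_{x_\lambda}^{\infty}(\mu_1(x)-\lambda)\,e^{2\phi}|\psi|^2\dx x\dx y\ \leq\ C_0 h^{2/3}\Vert\psi\Vert^2 .\]
The Taylor bound also yields $\mu_1(x)-\lambda\geq c\,h^{2/3}$ for $x\geq 2 x_\lambda$ (improving to a uniform positive constant once $x$ is bounded away from $0$), whence $\int_{x\geq 2x_\lambda}e^{2\phi}|\psi|^2\dx x\dx y\leq C\Vert\psi\Vert^2$. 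To conclude~\eqref{Agmon2} it remains to establish the pointwise comparison $\phi(x)\geq \eps_0 h^{-2/3}x-C$ for all $x\geq 2x_\lambda$ and some small $\eps_0>0$: this follows from $\phi(x)\geq c h^{-1}x^{3/2}$ near $0$ (using $\sqrt{\mu_1(s)-\lambda}\geq c\sqrt{s}$ on the range $[2x_\lambda,x_1]$ via Taylor) and $\phi(x)\geq c h^{-1}x$ for $x$ bounded away from $0$; both dominate $\eps_0 h^{-2/3}x$ as soon as $x\geq\eps_0^2 h^{2/3}$. The complementary integral over $[0,2x_\lambda]\subset[0,C h^{2/3}]$ is trivially bounded by a multiple of $\Vert\psi\Vert^2$ since $e^{2\eps_0 h^{-2/3}x}$ is uniformly $O(1)$ there. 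The main technical obstacle will be carrying out this last comparison uniformly in $h$, chasing constants through the Taylor estimates of $\mu_1$ across the turning point $x_\lambda\sim h^{2/3}$.
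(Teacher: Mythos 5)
Your proposal is correct and is exactly the argument the paper has in mind: the paper's proof of Proposition~\ref{Agmon} is a one-line reference to Proposition~\ref{lb-rQ} together with Agmon-type estimates for the effective one-dimensional operator $-h^2\partial_x^2+\tilde\mu_1(x)$ (citing~\cite{DauRay12, Agmon82, Hel88}); your master inequality $(\star)$ is precisely the Agmon identity for that operator obtained via Proposition~\ref{lb-rQ}, and the choices of weight $\phi$ you make for \eqref{Agmon1} and \eqref{Agmon2} are the standard ones. The constant chase at the end (comparing $\phi(x)=\delta h^{-1}\int_0^x\sqrt{(\mu_1(s)-\lambda)_+}\,\dx s$ with $\eps_0 h^{-2/3}x$ across the turning point $x_\lambda\lesssim h^{2/3}$, using $\mu_1(s)+1=2s+O(s^2)$) does go through uniformly in $h$ and gives the stated $e^{2\eps_0 h^{-2/3}x}$ weight; you correctly identified it as the only point requiring care.
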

\begin{proof}
This is a consequence of Proposition~\ref{lb-rQ} and of Agmon type estimates inherited from the one dimensional operator $-h^2 -\partial_{x}^2+\tilde\mu_{1}(x)$ (see~\cite{DauRay12} and also the original references~\cite{Agmon82, Hel88}).
\end{proof}

\subsubsection{Proof of Theorem~\ref{tensorisation-x=0}}
Let us consider an eigenpair $(\lambda,\psi)$ such that $\lambda\leq -1+C_{0}h^{2/3}$. We can write
$$\mathfrak{Q}_{h}(\psi)=\lambda\Vert\psi\Vert^2$$
and
$$\mathfrak{Q}_{h}(\psi)=\mathfrak{Q}_{h,+}(\psi)+\mathfrak{Q}_{h,-}(\psi),$$
where
$$\mathfrak{Q}_{h,-}(\psi)=\int_{\R^-_{x}\times\R_{y}}h^2|\partial_{x}\psi|^2+|\partial_{y}\psi|^2 \dx x \dx y,$$
$$\mathfrak{Q}_{h,+}(\psi)=\int_{\R^+_{x}\times\R_{y}}h^2|\partial_{x}\psi|^2+|\partial_{y}\psi|^2 \dx x \dx y-\int_{\R^+} |\psi(x,x)|^2\dx x-\int_{\R^{+}} |\psi(x,-x)|^2\dx x.$$
We infer that
$$\mathfrak{Q}_{h,+}(\psi)+\mathfrak{Q}_{h,-}(\psi)+\int_{\R^+_{x}\times\R_{y}}|\psi|^2\dx x\dx y+\int_{\R^-_{x}\times\R_{y}}|\psi|^2\dx x\dx y\leq C_{0}h^{2/3}\Vert\psi\Vert^2.$$
By Point \ref{5} of Proposition \ref{delta1D}, we deduce that
\begin{equation}\label{Q-<}
0\leq\mathfrak{Q}_{h,-}(\psi)+\int_{\R^-_{x}\times\R_{y}}|\psi|^2\dx x\dx y\leq C_{0}h^{2/3}\Vert\psi\Vert^2
\end{equation}
and
\begin{equation}\label{Q+<}
0\leq\mathfrak{Q}_{h,+}(\psi)+\int_{\R^+_{x}\times\R_{y}}|\psi|^2\dx x\dx y\leq C_{0}h^{2/3}\Vert\psi\Vert^2.
\end{equation}
We recall the points~\ref{1} and \ref{4} of Proposition~\ref{delta1D} to deduce from~\eqref{Q+<} that
$$\int_{\R^+_{x}\times\R_{y}} h^2 |\partial_{x}\psi|^2 \dx x \dx y+\int_{\R^+}\mathfrak{q}_{x}(\psi_{x}) \dx x-\int_{\R^+_{x}\times\R_{y}}\mu_{1}(x)|\psi|^2\dx x\dx y\leq C_{0}h^{2/3}\Vert\psi\Vert^2$$
where we recall that
$$\mathfrak{q}_{x}(\psi_{x})=\int_{\R_{y}} |\partial_{y}\psi_{x}|^2 \dx y- |\psi(x,x)|^2- |\psi(x,-x)|^2.$$
We have
$$\mathfrak{q}_{x}(\psi_{x})-\mu_{1}(x)\Vert\psi\Vert^2_{L^2(\R_{y})}=\mathfrak{q}_{x}(\psi-\Pi\psi)-\mu_{1}(x)\Vert\psi-\Pi\psi\Vert^2_{L^2(\R_{y})}$$
and then, due to the min-max principle,
$$\mathfrak{q}_{x}(\psi_{x}-(\Pi\psi)_{x})\geq\mu_{2}(x)\Vert\psi-\Pi\psi\Vert^2_{L^2(\R_{y})}.$$
We get
$$\int_{\R^+_{x}\times\R_{y}} (\mu_{2}(x)-\mu_{1}(x))|\psi-\Pi\psi|^2\dx x\dx y\leq C_{0}h^{2/3}\Vert\psi\Vert^2.$$
Due to the simplicity of $\mu_{1}$, we can find $\eps_{0}>0$ such that for $x\in[0,1]$ we have
$$\mu_{2}(x)-\mu_{1}(x)\geq \eps_{0}.$$
Then for $x\geq 1$ we use the estimates of Agmon~\eqref{Agmon2} and the boundedness of the $\mu_{j}$ to get
$$\int_{\R_{y}}\int_{x>1} (\mu_{2}(x)-\mu_{1}(x))|\psi-\Pi\psi|^2\dx x\dx y\leq C\int_{\R_{y}}\int_{x>1}|\psi|^2\dx x \dx y\leq Ce^{-2\eps_{0}h^{-2/3}}\Vert\psi\Vert^2,$$
where we have used
$$\|\Pi^\perp\psi\|_{L^2(\R_{y})}^2\leq \|\psi\|_{L^2(\R_{y})}^2.$$
We deduce that
$$\int_{\R^+_{x}\times\R_{y}} |\psi-\Pi\psi|^2\dx x\dx y\leq Ch^{2/3}\Vert\psi\Vert^2.$$
We have proved (it follows from the point~\ref{6} of Proposition~\ref{delta1D}) that  the application $[0,+\infty)\ni x\mapsto \Pi_{x}=\langle\cdot,\tilde u_{x}\rangle_{L^2(\R_{y})}\tilde u_{x}\in \mathcal{L}_{c}(L^2(\R_{y}),L^2(\R_{y}))$ is Lipschitzian (with Lipschitz constant $K>0$) so that
$$\Vert(\Pi_{x}-\Pi_{0})\psi_{x}\Vert_{L^2(\R_{y})}\leq K|x|\Vert\psi\Vert_{L^2(\R_{y})}.$$
Let us now consider for instance $\eta\in\left(0,\frac{1}{100}\right)$. We infer that
$$\int_{\R_{y}}\int_{0<x<h^{2/3-\eta}} |\Pi_{x}\psi_{x}-\Pi_{0}\psi|^2\dx x \dx y\leq K^2 h^{4/3-2\eta}\Vert\psi\Vert^2.$$
Thanks to the estimates of Agmon, we have
$$\int_{\R_{y}}\int_{x>h^{2/3-\eta}} |\Pi_{x}\psi_{x}-\Pi_{0}\psi|^2\dx x\dx y\leq Ce^{-2\eps_{0}h^{-2/3}}\Vert\psi\Vert^2.$$
We deduce that
$$\int_{\R^+_{x}\times\R_{y}} |\psi-\Pi_{0}\psi|^2\dx x\dx y\leq Ch^{2/3}\Vert\psi\Vert^2$$
and, since
$$\int_{\R^-_{x}\times\R_{y}}  |\psi-\Pi_{0}\psi|^2\dx x\dx y\leq \int_{\R^-_{x}\times\R_{y}}|\psi|^2\dx x\dx y,$$
the conclusion follows from~\eqref{Q-<}.

\paragraph{Acknowledgments} The second author would like to thank K. Pankrashkin which gave him the initial impulse to study the broken $\delta$-interactions.


\end{document}